\date{\today}
\newtheorem{thm}{Theorem}[section]
\newtheorem{lem}[thm]{Lemma}
\let\oldproofname=\proofname
\renewcommand{\proofname}{\rm\bf{\oldproofname}}
 \newtheorem{prop}[thm]{Proposition}
\theoremstyle{definition}
 \newtheorem{rmk}[thm]{Remark}
 \newtheorem{defn}{Definition}[section]
 \newtheorem{claim}[thm]{Claim}
\newcommand{\A}{\mathcal{A}}
\newcommand{\M}{\mathcal{M}}
\newcommand{\si}{\sigma}
\newcommand{\nin}{\notin}
\newcommand{\sm}{\setminus}
\newcommand{\s}{\mathcal{S}}
\numberwithin{equation}{section}
\begin{document}
\title{Homotopy type of  the independence complexes of a family of regular bipartite graphs }
\author{ Nandini Nilakantan\footnote{Department of Mathematics and Statistics, IIT Kanpur, Kanpur-208016, India. nandini@iitk.ac.in.},  Samir Shukla\footnote{{Department of Mathematics and Statistics, IIT Kanpur, Kanpur-208016, India. samirs@iitk.ac.in.}}}
\maketitle
\begin{abstract}
In this article, we  define a family of regular bipartite graphs and show that the homotopy type of the independence complexes of
this family is the wedge sum of spheres of certain dimensions.
\end{abstract}

%%%%%%%%%%%%%%%%%%%%%
 \section{Introduction}
 
The {\it independence complex} $\text{Ind}(G)$ of a graph $G$ is the simplicial complex, whose simplices are 
the independent sets of $G$.  
 Babson and Kozlov used the topology of independence complexes of cycles in the proof of the Lov\'{a}sz conjecture \cite{BK1}.
 Given a simplicial complex $X$, in \cite{eh}, Ehrenborg and Hetyei proved  that there exists a graph $G$, such
that $X$ is homeomorphic to Ind$(G)$. 
 In \cite{jj}, Jonsson proved that the independence complexes
of bipartite graphs have the same homotopy type as those of the suspensions of simplicial complexes.

Let $m$ and $d$ be positive integers with $d \leq m$. 
 The $d$-regular bipartite graph $G_m^d$ is defined to be the graph, whose set of vertices 
$V(G_m^d) = A \sqcup B$. Here, $A= \{a_1, \ldots, a_m\}$  and $ B= \{b_1, \ldots , b_m\}$.  For each $a_i \in A$, define
the neighbors of $a_i$, $N(a_i)$, to be the set
$\{b_i, b_{i+1  (\text{mod} \ m)}, \ldots, b_{i+ d-1  \text{(mod} \ m)}\}$. 
Clearly, for any $b_i \in B$, the neighborhood $N(b_i) = \{a_i, a_{i-1(\text{mod} \ m)}, \ldots, a_{i-d+1(\text{mod} \ m)} \}$. 
It is easy to see that for $d = 2$, $G_m^d \cong C_{2m}$, the cycle of length $2m$.
Kozlov proved the following in \cite{DK}.
\begin{thm}  For $r \geq 3$, let $C_r$ denote the cycle of length $r$. Then 
\begin{center}
            $\text{Ind}(C_r) \simeq \begin{cases}
            S^{k-1} \bigvee S^{k-1} & \text{if $r = 3k$},\\
             S^{k-1} & \text{if $ r = 3k \pm 1 $}.\\
                       \end{cases}$
  \end{center}
 \end{thm}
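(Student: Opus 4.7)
The plan is to invoke the standard cofibration for independence complexes: for any vertex $v$ of a graph $G$,
\[
\text{Ind}(G) \;=\; \text{Ind}(G \setminus v) \;\cup_{\text{Ind}(G \setminus N[v])}\; \bigl(v \ast \text{Ind}(G \setminus N[v])\bigr),
\]
and since $v \ast \text{Ind}(G \setminus N[v])$ is a cone (hence contractible), this exhibits $\text{Ind}(G)$ as the mapping cone of the inclusion $\text{Ind}(G \setminus N[v]) \hookrightarrow \text{Ind}(G \setminus v)$.

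First I would determine $\text{Ind}(P_n)$ for the path $P_n$ on vertices $v_1, \ldots, v_n$. Applying the decomposition at $v = v_2$ gives $P_n \setminus v_2 = \{v_1\} \sqcup P_{n-2}$, whose independence complex $\{v_1\} \ast \text{Ind}(P_{n-2})$ is a cone, hence contractible. Collapsing this contractible middle term in the mapping cone yields
\[
\text{Ind}(P_n) \;\simeq\; \Sigma\,\text{Ind}(P_{n-3}).
\]
With the base cases $\text{Ind}(P_1) \simeq \ast$, $\text{Ind}(P_2) \simeq S^0$, $\text{Ind}(P_3) \simeq S^0$ (direct inspection), induction gives $\text{Ind}(P_{3k}) \simeq S^{k-1}$, $\text{Ind}(P_{3k+1}) \simeq \ast$, and $\text{Ind}(P_{3k+2}) \simeq S^k$.

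For the cycle $C_r$, choosing any vertex $v$ gives $C_r \setminus v \cong P_{r-1}$ and $C_r \setminus N[v] \cong P_{r-3}$, so the cofibration becomes
\[
\text{Ind}(P_{r-3}) \hookrightarrow \text{Ind}(P_{r-1}) \to \text{Ind}(C_r).
\]
I would split into cases according to $r \bmod 3$. If $r = 3k+1$, then $\text{Ind}(P_{r-3}) = \text{Ind}(P_{3(k-1)+1})$ is contractible, so $\text{Ind}(C_r) \simeq \text{Ind}(P_{3k}) \simeq S^{k-1}$. If $r = 3k-1$, then $\text{Ind}(P_{r-1}) = \text{Ind}(P_{3(k-1)+1})$ is contractible, and collapsing it leaves $\text{Ind}(C_r) \simeq \Sigma\,\text{Ind}(P_{3k-4}) \simeq \Sigma S^{k-2} = S^{k-1}$.

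The main obstacle is the case $r = 3k$, where both $\text{Ind}(P_{r-3}) \simeq S^{k-2}$ and $\text{Ind}(P_{r-1}) \simeq S^{k-1}$ are nontrivial and one must identify the homotopy class of the inclusion $S^{k-2} \hookrightarrow S^{k-1}$. Since $\pi_{k-2}(S^{k-1}) = 0$ for $k \geq 2$, the inclusion is null-homotopic, and the mapping cone splits as $\text{Ind}(P_{r-1}) \vee \Sigma\,\text{Ind}(P_{r-3}) \simeq S^{k-1} \vee S^{k-1}$. The remaining case $r = 3$ (i.e.\ $k=1$) is handled by direct inspection: $C_3 = K_3$ has $\text{Ind}(C_3)$ equal to three isolated points, which is $S^0 \vee S^0$, matching the formula.
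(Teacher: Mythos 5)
Your argument is correct and complete. Note first that the paper gives no proof of this statement: Theorem 1.1 is quoted from Kozlov's \emph{Complexes of directed trees} \cite{DK} as background, so there is no internal argument to compare against. Your route --- writing $\text{Ind}(G)$ as the union of $\text{Ind}(G\setminus v)$ and the cone $v\ast\text{Ind}(G\setminus N[v])$ glued along $\text{Ind}(G\setminus N[v])$, deducing the suspension recursion $\text{Ind}(P_n)\simeq\Sigma\,\text{Ind}(P_{n-3})$ for paths, and then analysing the mapping cone for $C_r$ --- is the standard cofibration proof, and you correctly isolate the only delicate point: when $r=3k$ both path complexes are nontrivial spheres, and the splitting of the cone requires the vanishing of $\pi_{k-2}(S^{k-1})$ together with the separate inspection of $r=3$ where that argument is vacuous. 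The checks all go through (e.g.\ $\text{Ind}(P_{3k+1})\simeq\ast$, $\text{Ind}(P_{3k+2})\simeq S^k$, and the mapping cone of a null-homotopic map between well-pointed CW complexes splits as $B\vee\Sigma A$). For contrast, the machinery this paper develops for its main theorem is an explicit acyclic matching on the face poset of $\text{Ind}(G_m^d)$ in the sense of discrete Morse theory; for $d=2$ it recovers exactly the even-cycle case $r=2m$ of the statement, producing $d=2$ critical cells of dimension $2t-1$ when $(d+1)\mid m$ and a single critical cell of dimension $2t$ otherwise. That approach needs no homotopy-group input (once all critical cells are enumerated and found to lie in a single dimension, the wedge-of-spheres conclusion is immediate from Theorem 2.3), but the combinatorics of the matching is far heavier; your cofibration argument is shorter, more conceptual, and also covers the odd cycles, which the family $G_m^d$ does not reach.
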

 
 For $d = m-1$, $G_m^d$ is isomorphic to the categorical product of the complete graphs $K_2$ and $K_m$, {\it i.e}, $G_m^d \cong K_2 \times K_m$. The main result of this article is
 \begin{thm} \label{maintheorem} Let $m = t(d+1)+ \alpha$, where $d \geq 1, t \geq 0$ and $0 \leq \alpha \leq d$. Then 
\begin{center}
   $\text{Ind} (G_m^d) \simeq \begin{cases}
            \ \bigvee\limits_{\text{d-copies}}S^{2t-1} & \text{if} \ \ \alpha = 0,\\
            \ \ \ S^{2t} &   \text{otherwise}.\\
                            
                       \end{cases}$
 \end{center}
\end{thm}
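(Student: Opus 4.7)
The plan is to induct on $m$, aiming for the homotopy equivalence $\text{Ind}(G_m^d) \simeq \Sigma^2\, \text{Ind}(G_{m-d-1}^d)$ whenever $m > 2d+1$; this matches the statement since reducing $m$ by $d+1$ reduces $t$ by $1$ while both the dimension and the number of wedge summands shift correctly under a double desuspension. The base cases are the values $m \in \{d, d+1, \ldots, 2d+1\}$, one representative per residue class modulo $d+1$. In particular $m = d$ gives $G_d^d \cong K_{d,d}$ with $\text{Ind} \simeq S^0$, while the key base $m = d+1$ gives $G_{d+1}^d \cong K_2 \times K_{d+1}$, whose independence complex I would compute directly (two $d$-simplices joined by $d+1$ matching edges; a CW-collapse or Mayer--Vietoris argument yields $\bigvee_d S^1$), producing the $d$ wedge summands that reappear in every $\alpha = 0$ case.

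The inductive step rests on two standard tools: the cofibration sequence associated to a vertex $v$,
\[
\text{Ind}(G \setminus N[v]) \hookrightarrow \text{Ind}(G \setminus v) \longrightarrow \text{Ind}(G),
\]
(yielding $\text{Ind}(G) \simeq \Sigma\, \text{Ind}(G \setminus N[v])$ whenever $\text{Ind}(G \setminus v)$ is contractible), together with the fold lemma (if $u \neq w$ and $N_G(u) \subseteq N_G(w)$ then $\text{Ind}(G) \simeq \text{Ind}(G \setminus w)$) and the fact that an isolated edge contributes a suspension through $\text{Ind}(G_1 \sqcup G_2) = \text{Ind}(G_1) \ast \text{Ind}(G_2)$ and $\text{Ind}(K_2) = S^0$. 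I would apply the cofibration with $v = a_1$. Inside $G_m^d \setminus N[a_1]$, the vertex $a_2$ has only $b_{d+1}$ as a neighbor while each $a_i$ with $3 \leq i \leq d+1$ still has $b_{d+1}$ in its neighborhood, so $a_3, \ldots, a_{d+1}$ may be folded away, after which $\{a_2, b_{d+1}\}$ is isolated. This gives $\text{Ind}(G_m^d \setminus N[a_1]) \simeq \Sigma\, \text{Ind}(H)$ for the induced subgraph $H$ on $\{a_{d+2}, \ldots, a_m\} \cup \{b_{d+2}, \ldots, b_m\}$. A parallel but slightly longer folding argument on $G_m^d \setminus a_1$ (fold $b_3, \ldots, b_{d+1}$ using the chain $N(b_2) \subsetneq N(b_3) \subsetneq \cdots \subsetneq N(b_{d+1})$, then fold $a_{m-d+3}, \ldots, a_m$, then isolate $\{a_2, b_2\}$) similarly realizes $\text{Ind}(G_m^d \setminus a_1)$ as a suspension, and comparing the two suspensions via the cofibration map should deliver the desired $\Sigma^2\, \text{Ind}(G_{m-d-1}^d)$.

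The principal obstacle is twofold. First, the residual graphs left after folding are \emph{path-like} rather than cyclic: removing $a_1$ breaks the wraparound of $G_m^d$, so $H$ is not literally $G_{m-d-1}^d$. A further round of folds --- or, equivalently, setting up an auxiliary linear graph $L_n^d$ with a companion structure theorem and transferring information back --- will be needed to complete the identification. Second, the attaching map in the cofibration must be tracked carefully to ensure the double suspension does not acquire spurious wedge summands: in particular the jump at $\alpha = 0$ from a single $S^{2t}$ to a wedge of $d$ copies of $S^{2t-1}$ must be shown to originate only in the base case $t = 1, \alpha = 0$ and to propagate unchanged through the induction, rather than appearing spuriously in any $\alpha \neq 0$ step.
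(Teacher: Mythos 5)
Your outline is a genuinely different route from the paper (which builds an explicit acyclic matching on the face poset of $\text{Ind}(G_m^d)$ and classifies the critical cells via discrete Morse theory), and your base cases are correct: $\text{Ind}(G_d^d)\simeq S^0$ and $\text{Ind}(G_{d+1}^d)\simeq\bigvee_d S^1$ both check out, and they match the claimed formula. However, the inductive step as written has a concrete error and two unresolved gaps, so this is not yet a proof.

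The concrete error is the fold chain in $G_m^d\setminus a_1$. You claim $N(b_2)\subsetneq N(b_3)\subsetneq\cdots\subsetneq N(b_{d+1})$ after deleting $a_1$. But $N(b_2)\setminus\{a_1\}=\{a_2,a_m,a_{m-1},\ldots,a_{m-d+3}\}$ while $N(b_3)\setminus\{a_1\}=\{a_3,a_2,a_m,\ldots,a_{m-d+4}\}$, so $a_{m-d+3}\in N(b_2)\setminus N(b_3)$ whenever $d\geq 3$ and $m>d$. The chain holds only for $d=2$ (the cycle case), where $N(b_2)\setminus\{a_1\}$ is a single vertex; for general $d$ the neighborhoods of $b_2,\ldots,b_{d+1}$ in $G\setminus a_1$ are pairwise incomparable and the proposed folds are unavailable. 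Beyond this, the two obstacles you yourself flag are exactly where the mathematical content lives and neither is resolved: (i) the residual graph after removing $N[a_1]$ and folding is a \emph{linear} analogue of $G^d$, not $G_{m-d-1}^d$, so the recursion $\text{Ind}(G_m^d)\simeq\Sigma^2\,\text{Ind}(G_{m-d-1}^d)$ cannot be closed without introducing and completely analyzing a companion family $L_n^d$ (for $d=2$ this is the path-complex computation, with its own tripartite periodicity including contractible cases); and (ii) the cofibration only collapses to $\text{Ind}(G)\simeq\Sigma\,\text{Ind}(G\setminus N[v])$ when $\text{Ind}(G\setminus v)$ is contractible, which is not always the case here (already for cycles $\text{Ind}(C_n\setminus v)$ is a path complex that is a sphere for two of the three residues of $n-1$ mod $3$), so the connecting map must actually be analyzed rather than asserted to behave. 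Until the linear family is computed and the attaching maps controlled, the claimed double-suspension recursion --- and hence the count of $d$ wedge summands in the $\alpha=0$ case versus a single sphere otherwise --- remains unestablished.
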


%\begin{figure} [h!]
%\begin{subfigure}[]{0.4\textwidth}
%\centering
%\begin{tikzpicture}[scale=.5, transform shape]
%\tikzstyle{every node} = [circle, draw = black!200, fill= white!100]
%\node (a) at (1,9) {$a_1$};
%\node (b) at (1,7) {$a_2$};
%\node(c) at (1,5) {$a_3$};
%\node (d) at (1,3) {$a_4$};
%\node (e) at (1,1) {$a_5$}; 
%\node (f) at (7,9) {$b_1$};
%\node (g) at (7,7) {$b_2$};
%\node (h) at (7,5) {$b_3$};
%\node (i) at (7,3) {$b_4$};
%\node (j) at (7,1) {$b_5$};
%\foreach \to/\from in
%{a/f,a/g,b/g,b/h,c/h, c/i, d/i, d/j,  e/j, e/f} \draw [-] (\to)--(\from);
%\end{tikzpicture}
%%\caption{} \label{fig:G2}
%\end{subfigure}
%\begin{subfigure}{0.1\textwidth}
%\centering
%$\cong$
%\end{subfigure}
%\begin{subfigure}[]{0.4\textwidth}
%\centering
%\begin{tikzpicture}[scale=.6, transform shape]
%\tikzstyle{every node} = [circle, draw = black!100, fill= gray!0]
%\node (a) at +(0: 3) {$a_1$};
%\node (b) at +(36: 3 ) {$b_2$};
%\node (c) at +(72: 3)  {$a_2$};
%\node (d) at +(108: 3)  {$b_3$};
%\node (e) at +(144: 3)  {$a_3$};
%\node (f) at +(180: 3)  {$b_4$};
%\node (g) at +(216: 3)  {$a_4$};
%\node (h) at +(252: 3)  {$b_5$};
%\node (i) at +(288: 3)  {$a_5$};
%\node (j) at +(324: 3)  {$b_1$};
%
%\foreach \from/\to in {a/b, b/c, c/d, d/e, e/f, f/g, g/h, h/i, i/j, j/a}
%\draw (\from) -- (\to);
%\end{tikzpicture}
%
%%\caption{$G_5^3$} \label{fig:G1}
%\end{subfigure}
%\caption{$G_5^2$} \label{ex}
%\end{figure}
%

 \section{Preliminaries}

A  {\it graph} $G$ is a  pair $(V(G), E(G))$,  where $V(G)$  is the set of vertices of $G$  and $E(G) \subset V(G)\times V(G)$ denotes the set of edges.
 If $(x, y) \in E(G)$, it is also denoted by $x \sim y$. Here, $x$ is said to be adjacent to $y$. A non empty subset $\mathcal{I}$ of $V(G)$ is called an {\it independent set}, if for any $x, y \in \mathcal{I}$, $x \nsim y$ in $G$.
 % The independence complex, Ind$(G)$ of a graph $G$ is that simplicial complex, whose simplices are the independent sets of $G$. 
A {\it  bipartite graph} is a graph $G$ with subsets  $X$ and $Y$ of $V(G)$ such that $V(G)=X   \sqcup Y$ and $(v,w) \notin E(G)$ if $\{v,w\} \subseteq X$ or $\{v,w\} \subseteq Y$.

For any subset $A\subset V(G)$, the {\it
neighborhood  of A} $N(A)$, is defined as $N(A) =\cup_{a \in A} N(v)$, where the {\it
neighborhood  of $v$}, $N(v)=\{ w \in V(G) \ |  \
(v,w) \in E(G)\}$. 
The {\it degree} of a vertex $v$ is defined  as $\text{deg}(v) =
|N(v)|$. Here $| X|$ represents the
cardinality of the set $X$. A graph $G$ is said to be $d$-regular if $\text{deg}(v) = d$ for all $v \in V(G).$

 A {\it graph homomorphism} from $G$ to $H$ is a function $\phi: V(G) \to V(H)$, where  $ v \sim w$ in $G$ implies that $\phi(v) \sim \phi(w)$ in $H$.
If $\phi$ is bijective and $\phi^{-1} : V(H) \to V(G)$ is also a graph homomorphism, then  $\phi$ is called an {\it isomorphism}. In this case, $G$ is said to be isomorphic to $H$ and is denoted by $G \cong H$.
The {\it categorical product} of two graphs $G$ and $H$, denoted by $G\times H$ is the graph where $V(G\times H)=V(G)\times V(H)$ and  $(g,h) \sim (g',h')$ in $G\times H$  if $g \sim g'$  and $h \sim h'$ in $G$ and $H$ respectively.

A {\it finite abstract simplicial complex X} is a collection of
finite sets such that if $\tau \in X$ and $\sigma \subset \tau$,
then $\sigma \in X$.  
The elements  of $X$ are called {\it simplices}
of $X$. By convention the empty set is a simplex of every abstract simplicial complex. 
If $\sigma \in X$ and $|\sigma |=k+1$, then $\sigma$ is said
to be $k-dimensional$. 
%A $k-1$ dimensional subset of a $k$-simplex
%$\sigma$ is called a {\it facet} of $\sigma$.
  Let $(X_1, x_1), \ldots, (X_n, x_n)$ be pointed topological spaces and $X$ be the disjoint union $\bigsqcup\limits_{i=1}^{n} X_i$. The quotient space obtained from $X$, by identifying  $x_1, \ldots, x_n$ to a single point $x_0$ is called the {\it wedge sum} of $X_{i}$, $1\leq i \leq n$ and is denoted by $\bigvee\limits_{i=1}^{n} X_i$. 
 
%\item
%The {\it independence complex}, $\text{Ind}(G)$ of a graph $G$ 
%is the abstract simplicial complex whose simplices are the independent sets of $G$.
%\item
%A topological space $X$ is said to be $n$-{\it connected} if $\pi_\ast(X)= 0$ for all $ \ast \leq n$.\\
% By convention, $\pi_0(X)= 0 $ means $X$ is connected.
%The connectivity of a topological space $X$ is denoted by
%$\text{conn}(X)$, {\it i.e.}, $\text{conn}(X)$ is the largest
%integer $m$ such that $X$ is $m$ connected.
%If $X$ is a non empty disconnected space, it is said to be $-1$ connected and if it is empty,
%it is said to be $-2$  connected.
%\item
%  Let  $X$ be a simplicial complex and $\tau, \sigma \in X$ such that
%$\sigma \subsetneq \tau$ and  $\tau$ is the only maximal simplex in $X$ that contains $\sigma$.
%  A  {\it simplicial collapse} of $X$ is the simplicial complex $Y$ obtained from $X$ by
%  removing all those simplices $\gamma$  of $X$ such that
%  $\sigma \subseteq \gamma \subseteq \tau$. $\sigma$ is called a {\it free face} of $\tau$ and $(\sigma, \tau)$
%  is called a {\it collapsible pair} and is denoted by $X \searrow Y$. In particular, if 
%  $X \searrow Y$, then $X \simeq Y$.

We now introduce some tools from Discrete Morse Theory  which have been
used in this article. R. Forman in \cite{f} introduced, what has now
become a standard tool in Topological Combinatorics, Discrete Morse
Theory. The principal idea of Discrete Morse Theory (simplicial) is to reduce the original
complex  by a sequence of collapses to a homotopy equivalent complex, which is not necessarily
simplicial, but, which has fewer cells (refer \cite{jj1}, \cite{dk}).
\begin{defn}
A {\it partial matching} in a poset $P$ is a subset $\M$ of $P \times P$ such that
\begin{itemize}
\item $(a,b) \in \M$ implies $b\succ a$, {\it i.e. $a<b$ and $\not\exists \,c$ such that $a<c<b$}.
\item each element  in $P$ belongs to at most one element of $\M$.
\end{itemize}
\end{defn}
If $\M$ is a {\it partial matching} on a poset $P$
then, there exists  $A \subset P$ and an injective map $f: A
\rightarrow P\setminus A$ such that $f(x)\succ x$ for all $x \in A$.

\begin{defn}
An {\it acyclic matching} is a partial matching  $\M$ on the poset $P$ such that there does not exist a cycle
\begin{eqnarray*}
f(x_1)  \succ x_1 \prec f( x_2) \succ x_2  \prec f( x_3) \succ x_3 \dots   f(x_t) \succ x_t  \prec f(x_1), t\geq 2.
\end{eqnarray*}
\end{defn}
For an acyclic partial matching on $P$, those elements of $P$ which do not belong to the matching are said to be 
{\it critical }. To obtain the  desired homotopy equivalence, the following result is used.

\begin{thm} (Main theorem of Discrete Morse Theory)\label{dmt1} \cite{f}

Let $X$ be a simplicial complex and $\A$ be an acyclic matching on the face poset of $X$ 
such that the empty set is not critical. 
Then, $X$ is homotopy equivalent to a cell complex which has a $d$ -dimensional cell for each $d$ -dimensional
critical face of $X$ together with an additional $0$-cell.
\end{thm}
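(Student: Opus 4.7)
The plan is to show, by repeated application of elementary simplicial collapses dictated by the matching $\A$, that $X$ deformation retracts onto a CW subcomplex whose cells are in bijection with the critical faces (with one extra $0$-cell needed for the empty-face convention). The central task is to identify an order in which the matched pairs can be realized as genuine free-pair collapses.

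The first step is to reformulate the acyclicity hypothesis as a directed-graph statement. Orient every covering relation $a \prec b$ in the face poset $P = F(X) \cup \{\emptyset\}$ by a downward arrow $b \to a$, except when $(a,b) \in \A$, in which case orient it upward as $a \to b$. The forbidden alternating cycle in the definition of an acyclic matching is precisely a directed cycle in this digraph $D_\A$, so acyclicity is equivalent to $D_\A$ being a directed acyclic graph. Consequently the matched pairs inherit a strict partial order, and any linear extension of that order will drive the collapse procedure.

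The key lemma---and the main obstacle---is the following: as long as $\A$ is nonempty, there exists a matched pair $(a, b) \in \A$ such that $b$ is a maximal simplex of the current complex and $a$ is contained in no simplex other than $b$. I would prove this by contradiction: if every matched pair had either a strictly larger face above $b$ or a second face above $a$ distinct from $b$, then starting from any pair and following these ``problem'' faces step by step through the matching and covering relations would, by finiteness, eventually revisit a pair, producing a directed cycle in $D_\A$ and contradicting acyclicity. With such a free pair in hand, I perform the elementary collapse that removes $a$ and $b$ from $X$; the restricted matching $\A \setminus \{(a,b)\}$ remains acyclic on the reduced complex (any alternating cycle in the restriction would also be one in $\A$), so the argument iterates.

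Finally, after exhausting all pairs, only the critical simplices remain, giving a CW complex with one $d$-cell per $d$-dimensional critical face. Under the hypothesis that $\emptyset$ is not critical, it is matched with some vertex $v$; since $\emptyset$ is not an actual cell, the nominal pair $(\emptyset, v)$ cannot be performed as a simplicial collapse, so one retains $v$ as the ``additional $0$-cell'' needed to obtain a nonempty CW complex. Each elementary collapse is a deformation retract, so the resulting CW complex is homotopy equivalent to $X$, completing the proof.
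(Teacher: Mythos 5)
The paper does not prove this statement; it is quoted from Forman \cite{f} as a black box, so your proposal must stand on its own. It does not: the key lemma you isolate is false once critical cells are present. Take $X$ to be the boundary of a triangle on vertices $1,2,3$ with the acyclic matching $(\emptyset,\{1\})$, $(\{2\},\{1,2\})$, $(\{3\},\{1,3\})$ and the single critical cell $\{2,3\}$. Every matched edge is maximal, but its matched vertex also lies in the critical edge $\{2,3\}$, so no matched pair is a free pair and not a single elementary collapse can be performed --- as must be the case, since $S^1$ is not collapsible yet the theorem still applies and yields a CW complex with one $0$-cell and one $1$-cell. Your proof of the lemma by ``walking along problem faces'' breaks at exactly this point: when the second coface of $a$ is a critical simplex, the walk cannot be continued through the matching, so no alternating cycle is produced and acyclicity is not contradicted.

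The missing idea is that the argument cannot be a global sequence of simplicial collapses of $X$; one must instead choose a linear extension of the partial order induced on $P$ by the acyclic digraph $D_\A$ (your reformulation of acyclicity is fine and is indeed the standard first step) and build $X$ up through the corresponding filtration $X_1 \subseteq X_2 \subseteq \dots \subseteq X$. Adding a matched pair $(a,b)$ at its place in this order is an elementary expansion, hence a homotopy equivalence, while adding a critical $d$-simplex changes the homotopy type by attaching a $d$-cell; one then needs an auxiliary lemma (homotopy extension / pushing attaching maps through homotopy equivalences) to conclude that the limit is homotopy equivalent to a CW complex with one cell per critical face, plus the extra $0$-cell coming from the pair containing $\emptyset$. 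Your final paragraph about the pair $(\emptyset,v)$ accounting for the additional $0$-cell is the right bookkeeping, but it only becomes available after the filtration argument replaces the free-pair collapse scheme.
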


 \section{Main Result}

Let    $I$ denote $\text{Ind}(G_m^{d})$,
$P$ the face poset of $(I, \subseteq)$ and $S_{a_1}= \{\sigma \in P \ | \ a_1 \notin \sigma , \sigma \cup  \{a_1\} \in P \}$. 
Define the map $\mu_{a_1} : S_{a_1}  \rightarrow P \setminus S_{a_1}$ by $\mu_{a_1}(\sigma) = \sigma \cup \{a_1\}.$
Let $S_{a_1}' = P \setminus \{S_{a_1}, \mu_{a_1}(S_{a_1})\}.$ For $i \in \{2 , \ldots, m\}$ define $\mu_{a_i} : S_{a_i} 
\rightarrow S_{a_{i-1}}' \setminus S_{a_i}$ to be the map $\mu_{a_i}(\sigma) = \sigma \cup \{a_i\}$, where 
$S_{a_i} = \{\sigma \in S_{a_{i-1}}' \ | \  a_i  \notin \sigma , \sigma \cup \{a_i\} \in S_{a_{i-1}}'\}$ and 
$S_{a_i}'= 
S_{a_{i-1}}' \setminus \{S_{a_i}, \mu_{a_i}(S_{a_i})\}.$  By the above construction, $S_{a_i} \cap S_{a_{j}} =\emptyset$ $\forall \ i \neq j$.

Let $S_1 = \bigcup \limits_{i=1}^{m} S_{a_i}$ and $\mu_1 : S_1 \rightarrow P \setminus S_1$ be defined by $\mu_1(\sigma) =\mu_{a_i} (\sigma)$, if $\sigma \in S_{a_i}$.  

On the set $M = P \setminus \{S_1, \mu_1(S_1)\}$, in a similar manner define the sets  $S_{b_i}$ and $\mu_{b_i}(S_{b_i})$ for all $1 \leq i \leq m$. Let $S_2 = \bigcup \limits_{i=1}^m S_{b_i}$ and the map $\mu_2 : S_2 \rightarrow M \setminus S_2$ be defined by $\mu_2(\sigma) = \mu_{b_i}(\sigma)$, where $i$ is the unique integer such that $\sigma \in S_{b_i}$. Clearly $S_1 \cap S_2 = \emptyset.$ 

Define $\mu: S = S_1 \cup S_2 \rightarrow P \setminus S$ by

\begin{center}
   $\mu(\sigma)= \begin{cases}
            \ \mu_1(\sigma) & \text{if} \ \sigma \in S_1\\
            \ \mu_2(\sigma) & \text{if} \ \sigma \in S_2.\\
                            
                       \end{cases}$
 \end{center}
Clearly, $\mu$ is  injective  and is therefore, a well defined partial matching on $P$.  
 \begin{prop}
 $\mu$ is an acyclic matching on $P$.
  \end{prop}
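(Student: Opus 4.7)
The plan is to rule out a potential cycle in $\mu$ by a descending-index argument, separately on $P\setminus M$ and on $M$, where $M:=P\setminus(S_1\cup\mu_1(S_1))$.

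First I would prove by induction on $i$ the characterization
\[
 S_{a_i}=\{\sigma\in P:\ a_j\notin\sigma\text{ and }N(a_j)\cap\sigma\neq\emptyset\text{ for all }j<i,\ a_i\notin\sigma,\ N(a_i)\cap\sigma=\emptyset\}.
\]
The induction rests on two facts: $A=\{a_1,\ldots,a_m\}$ is an independent set of $G_m^d$, so $\sigma\cup\{a_1,\ldots,a_i\}$ is independent iff $\sigma$ meets none of $N(a_1),\ldots,N(a_i)$; and membership in $S_{a_{i-1}}'$ precludes $\sigma\cup\{a_j\}$ from being independent for $j<i$. A parallel (though more intricate) induction should furnish a description of $S_{b_l}$ for the $b$-matching carried out within $M$.

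Next I would verify the structural observation that $M$ is upward-closed in $P$: given $\sigma\in M$ and $\tau\supset\sigma$ with $\tau\in P$, $\tau$ cannot contain any $a_i$ (it would be adjacent to some $b\in\sigma\cap N(a_i)$), so $\tau\subset B$, and $\tau\supset\sigma$ continues to meet every $N(a_j)$, whence $\tau\in M$. Consequently every matched pair of $\mu$ lies either entirely in $P\setminus M$ (a $\mu_1$-pair) or entirely in $M$ (a $\mu_2$-pair), so a hypothetical cycle of $\mu$ must lie on one side.

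Finally, assuming a cycle $\sigma_1\prec\mu(\sigma_1)\succ\sigma_2\prec\cdots\prec\mu(\sigma_t)\succ\sigma_1$ with $t\geq 2$, I write $\mu(\sigma_i)=\sigma_i\cup\{v_i\}$ and $\sigma_{i+1}=(\sigma_i\setminus\{w_i\})\cup\{v_i\}$ for some $w_i\in\sigma_i\setminus\{v_i\}$, so $v_i\in\sigma_{i+1}$. In the $\mu_1$-case, writing $v_i=a_{k_i}$, the characterization of $S_{a_{k_{i+1}}}$ forbids $a_{k_i}\in\sigma_{i+1}$ whenever $k_i\leq k_{i+1}$; thus $k_{i+1}<k_i$, and following the cycle yields the impossible descent $k_1>k_2>\cdots>k_t>k_1$. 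The $\mu_2$-case on $M$ is handled by the same template using the $b$-characterization: $b_{k_i}\in\sigma_{i+1}$ together with $\sigma_{i+1}\in S_{b_{k_{i+1}}}$ is used to force $k_{i+1}<k_i$, yielding the analogous contradiction.

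The main obstacle is the $\mu_2$-analysis on $M$. Since $M$ is only upward-closed in $P$ and not a subcomplex, the inductive description of $S_{b_l}$ is more delicate than that of $S_{a_i}$: it involves criticality conditions of the form $\sigma\setminus\{b_j\}\notin M$ inherited recursively from the chain $\sigma\in S_{b_{l-1}}'\subset\cdots\subset S_{b_1}'$, rather than the clean "$b_j\notin\sigma$ for all $j\leq l$" one might hope for. Extracting the strict monotonicity $k_{i+1}<k_i$ from this characterization (possibly by tracking what happens if $\sigma_{i+1}$ carries $b_{k_i}$ unmatched through step $k_i$, which would force its codimension-one face $\sigma_i\setminus\{w_i\}$ to have been matched earlier, and then deriving a contradiction with $\sigma_i\in S_{b_{k_i}}$) is the crux of the argument.
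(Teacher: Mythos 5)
Your analysis of the $\mu_1$-part is correct, and the observation that $M$ is upward-closed (so a hypothetical cycle lies entirely in $P\setminus M$ or entirely in $M$) is sound. But the step you yourself flag as the crux --- the strict descent $k_{i+1}<k_i$ for the $\mu_2$-part --- is not merely delicate; as a local statement it is \emph{false}, for exactly the reason you half-identify: since $M$ is upward- but not downward-closed, a cell $\tau\in M$ with $b_{k_i}\in\tau$ can survive round $k_i$ simply because $\tau\setminus\{b_{k_i}\}\notin M$. Your sketched patch only treats the case where $\tau\setminus\{b_{k_i}\}$ was matched in an earlier round, and in the ``$\notin M$'' case no contradiction arises. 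Concretely, take $d=2$, $m=6$, so that $M$ consists of the $\sigma\subseteq B$ meeting every $N(a_i)=\{b_i,b_{i+1}\}$. Then $\sigma=\{b_2,b_3,b_5,b_6\}\in S_{b_1}$ and $\mu(\sigma)=\{b_1,b_2,b_3,b_5,b_6\}$, while $\tau=\mu(\sigma)\setminus\{b_2\}=\{b_1,b_3,b_5,b_6\}$ lies in $S_{b_4}$: it survives round $1$ because $\tau\setminus\{b_1\}\notin M$, round $2$ because $\tau\cup\{b_2\}=\mu(\sigma)$ was already matched, round $3$ because $\tau\setminus\{b_3\}\notin M$, and both $\tau$ and $\tau\cup\{b_4\}$ reach round $4$ unmatched. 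So a single cycle step can raise the index from $1$ to $4$, and the monotone invariant your argument needs does not exist.

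The repair is to replace the local descent by a global minimal-index argument, which is what the paper does (uniformly over all $2m$ rounds, with the order $a_1<\cdots<a_m<b_1<\cdots<b_m$, so the $P\setminus M$ versus $M$ split and the explicit characterizations of $S_{a_i}$, $S_{b_l}$ become unnecessary). Let $x$ be the \emph{least} matching vertex with $\{\sigma_1,\ldots,\sigma_t\}\cap S_x\neq\emptyset$, say $\sigma_1\in S_x$, and let $l$ be the first index at which the cycle drops $x$ (if it never drops $x$, then $x\in\sigma_t$ forces $\mu(\sigma_t)=\sigma_1\cup\{x\}=\mu(\sigma_1)$, hence $\sigma_t=\sigma_1$). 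Then $\mu(\sigma_{l-1})=\sigma_l\cup\{x\}$, and by minimality of $x$ every cell of the cycle and every $\mu(\sigma_i)$ lies in some $S_y\cup\mu_y(S_y)$ with $y\geq x$, so both $\sigma_l$ and $\sigma_l\cup\{x\}$ have survived all rounds before $x$; by the definition of the matching this puts $\sigma_l\in S_x$, whence $\mu(\sigma_l)=\mu(\sigma_{l-1})$ and $\sigma_l=\sigma_{l-1}$, a contradiction. Minimality is what blocks the escape route exhibited above: one argues about the lower member $\sigma_l$ of a pair rather than about whether the upper member $\tau\ni b_{k_i}$ gets matched, so the obstruction $\tau\setminus\{b_{k_i}\}\notin M$ never enters.
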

\begin{proof}
Let there exist distinct cells $\sigma_1, \ldots, \sigma_t \in S$ such that $\mu(\sigma_i) \succ 
\sigma_{i+1 \ (\text{mod} \ t)}, 1 \leq i \leq t$.

The elements of $V(G_m^d)$ are ordered as  $a_1 < a_2 < \ldots < a_m < b_1 < \ldots < b_m.$ 
 Let $x \in V(G_m^d)$ be the least element such that $\{\sigma_1, \ldots, \sigma_t\} \cap S_x \neq \emptyset$.
 Without loss of generality assume that $\sigma_1 \in S_x$ {\it i.e.}, $x \notin \sigma_1$ and $\mu(\sigma_1) = \sigma_{1} \cup \{x\}$. $\mu(\sigma_1) \succ \sigma_2$ and  $\sigma_1 \neq \sigma_2$ implies that there exists $x' \in \mu_1(\sigma_1), x' \neq x$ such that 
 $\sigma_2 = \mu(\sigma_1) \setminus \{x'\}$. We now have the following two possibilities:
 
 \begin{enumerate}
 \item $x \in \si_{t}$. 
 
 $\sigma_1 \in S_x$ implies that $x \notin \sigma_1$.  $x \in \sigma_t$ implies that $x \in \mu(\sigma_t)$. Therefore, $\sigma_1 = \mu(\sigma_t) \setminus \{x\}$ which implies that $\mu(\sigma_1) = \mu(\sigma_t)$ a contradiction, since $\sigma_1 \neq \sigma_t$.
 
 \item $x \nin \si_{t}$, {\it i.e.} $\exists$ a smallest $l \in \{2, \ldots, t\}$ such that $x \notin \sigma_l$.
 
 $x \in \mu(\sigma_{l-1})$ and $x \notin \sigma_l$ implies that $\sigma_l = \mu(\sigma_{l-1}) \setminus \{x\}$ {\it i.e.}, $\mu(\sigma_{l-1}) = \sigma_l \cup \{x\}$. Since $\sigma_l$ and $\mu(\sigma_{l-1}) \notin S_i \cup \mu_i(S_i) \ \forall \ i < x$, from the definition $\sigma_l \in S_x$. This implies that $\mu(\sigma_l) = \sigma_l \cup \{x\} = \mu(\sigma_{l-1})$, which implies that $\sigma_l = \sigma_{l-1}$, a contradiction.

Therefore,  $\mu$ is an acyclic matching on $P$.
\end{enumerate}
 \end{proof}
 
We can now conclude that the set of critical cells corresponding to this matching is $P \sm\{ S, \mu(S)\}$, denoted by $C$.
 Let $M = P \setminus \{S_1 \cup \mu_1(S_1)\}$, $\s_{a_{j}} = S_{a_{j}}\cup ~\mu_{a_{j}}(S_{a_{j}})$ and $\s_{b_{j}} = S_{b_{j}}\cup ~ \mu_{b_{j}}(S_{b_{j}})$ $\forall \ i \in \{1,\ldots , m\}$.
 
 \begin{lem} \label{lemma1}
 For any $\sigma \in I$, let either $a_i \in \sigma$ or $\sigma \cup \{a_i\} \in I$. Then, there exists $a_{j} \leq a_{i}$
 such that $\sigma \in  \s_{a_{j}}$, {\it i.e.}, $\si \in C$  implies that $\si \subseteq B$ and $a \in N(\si)$ $\forall~ a \in A$.
\end{lem}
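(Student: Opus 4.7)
The plan is to reduce the lemma to a sharper, two-sided characterization of the auxiliary sets $S'_{a_i}$ that the matching construction produces, and then read off the lemma as a contrapositive. Concretely, I would prove by induction on $i$ the equivalence
\[
\sigma \in S'_{a_i} \;\Longleftrightarrow\; a_j \notin \sigma \ \text{and}\ \sigma \cup \{a_j\} \notin I \ \text{for every } 1 \leq j \leq i.
\]
The forward direction records that a $\sigma$ left unmatched through stage $i$ must have every earlier $a_j$ blocked by some neighbor inside $\sigma$; the reverse direction records that if every such $a_j$ is blocked then $\sigma$ is genuinely invisible to the first $i$ rounds of matching. Once this is in place, the lemma is immediate from the contrapositive: if $\sigma \notin \mathcal{S}_{a_j}$ for every $j \leq i$ then $\sigma \in S'_{a_i}$, so in particular $a_i \notin \sigma$ and $\sigma \cup \{a_i\} \notin I$, contradicting the hypothesis.

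For the characterization itself, the base $i=1$ is direct from the definitions: $\mu_{a_1}(S_{a_1})$ is exactly the set of simplices containing $a_1$, and $S_{a_1}$ is exactly the set of $\sigma$ with $a_1 \notin \sigma$ and $\sigma \cup \{a_1\} \in I$. For the inductive step, the forward implication is routine: $\sigma \in S'_{a_i} \subseteq S'_{a_{i-1}}$ gives the conditions for $j < i$ by the induction hypothesis, and failure to be matched at stage $i$ gives the condition for $j = i$. The reverse implication is the delicate one: from the conditions for all $j \leq i$, the induction hypothesis yields $\sigma \in S'_{a_{i-1}}$, and then I must check $\sigma \notin S_{a_i} \cup \mu_{a_i}(S_{a_i})$. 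The inclusion $\sigma \notin \mu_{a_i}(S_{a_i})$ is immediate since $a_i \notin \sigma$, while $\sigma \notin S_{a_i}$ follows because $\sigma \cup \{a_i\} \notin I$, hence $\sigma \cup \{a_i\} \notin P$, so a fortiori $\sigma \cup \{a_i\} \notin S'_{a_{i-1}}$, which is one of the three conditions required for membership in $S_{a_i}$.

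The final \emph{i.e.} clause then falls out at once: a critical cell $\sigma \in C$ is excluded from every $\mathcal{S}_{a_j}$, so applying the lemma at $i = m$ (via its contrapositive) forces $a_i \notin \sigma$ and $\sigma \cup \{a_i\} \notin I$ for every $i$; the former gives $\sigma \subseteq B$ and the latter forces each $a_i$ to have a neighbor in $\sigma$, i.e.\ $a_i \in N(\sigma)$. The main obstacle I anticipate is exactly the bookkeeping around the nested sequence $S'_{a_1} \supseteq S'_{a_2} \supseteq \cdots$: because $S'_{a_i}$ is defined relative to $S'_{a_{i-1}}$ rather than the full poset $P$, one cannot test membership by a single local condition, and the induction must be carried as the two-sided equivalence above. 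The bipartite structure enters quietly but essentially at this point, since any edge witnessing $\sigma \cup \{a_j\} \notin I$ necessarily connects $a_j \in A$ to a vertex of $\sigma \cap B$, so such blocking conditions remain stable under adding or removing other vertices from $A$.
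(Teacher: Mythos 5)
Your proposal is correct and follows essentially the same route as the paper: an induction over the matching rounds showing that a cell surviving to stage $i$ must have every $a_j$ with $j\leq i$ absent from $\sigma$ and blocked by a neighbor in $\sigma\cap B$, with the bipartite structure guaranteeing (exactly as you note at the end) that these blocking conditions are unaffected by adding or deleting vertices of $A$. The paper phrases this by taking the smallest index $l'$ at which $\sigma$ becomes "visible" and exhibiting its match in $\s_{a_{l'}}$, whereas you package the same content as a two-sided characterization of $S'_{a_i}$ and read the lemma off as a contrapositive; the substance is identical.
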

\begin{proof} 
If $i=1$, then $\sigma \in \s_{a_1}.$  Inductively, let the result hold for all $ l <m$ and $l'\in \{l+1, \ldots m \}$ be the smallest integer such that either $a_{l'} \in \sigma$ or  $\sigma \cup \{a_{l'}\} \in I$. Without loss of generality, let $a_{i} \nin \si$ and $\si \cup \{a_{i}\} \nin I$, $\forall ~ i < l'$.
If $a_{l'} \in \si$, then $\si \sm \{a_{l'}\} \cup \{a_{j}\} \nin I$ $\forall ~j <l'$. Therefore, $\si$, $\si \sm\{ a_{l'}\} \in 
\s_{a_{l'}}.$
 Similarly,  $ \si \cup \{a_{l'}\} \in  I$ implies that  $\si$, $\si \cup \{a_{l'}\} \in \s_{a_{l'}}.$
\end{proof}

Define the set $B_{s}$ to be $\{b_{s}, b_{s+1}, \ldots, b_{s+m-d-1}\}$, where the additions in the subscripts are modulo $m$. $\tau \subseteq B_{s}$ will imply that $b_{s}\in \tau$.
The following is an  easy observation.

\begin{lem}\label{rmk0}
Let $\si \subseteq B_{s}$ for some $s \in \{1,\ldots,m\}$. Then,
\begin{enumerate}
\item $1 <i <s$  and $b_{i} \in B_{s}$ implies that $b_{j} \in B_s$ $\forall ~ j \in \{1,\ldots, i-1\}$.
\item  $s <j \leq m$ and $b_j \in B_{s}$ for some $j \in \{s+1, \ldots, m\}$ implies that $b_{i} \in B_{s} ~\forall ~ i\in \{s+1, \ldots, j-1\}$.
\end{enumerate}
\end{lem}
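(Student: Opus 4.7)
The plan is to observe that $B_s = \{b_s, b_{s+1}, \ldots, b_{s+m-d-1}\}$ is, by definition, a cyclically consecutive block of $m-d$ indices starting at $s$, with additions taken modulo $m$. Both conclusions are really structural statements about this block, and neither uses the hypothesis $\sigma \subseteq B_s$ in an essential way; so I would prove the two parts directly by unpacking the definition.

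For part 2, the assumption $b_j \in B_s$ with $s < j \leq m$ means the block reaches index $j$ without wrapping past $m$, hence $j = s + k$ for some $0 \leq k \leq m - d - 1$. For any $i$ with $s < i < j$, writing $i = s + (i - s)$ gives $0 < i - s < k \leq m - d - 1$, which places $b_i$ in $B_s$.

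For part 1, the assumption $b_i \in B_s$ with $1 < i < s$ forces the block to wrap past $m$ before reaching index $i$: there exists $\ell$ with $m - s + i \leq \ell \leq m - d - 1$ such that $i \equiv s + \ell \pmod{m}$. For any $1 \leq j < i$, I would set $\ell' = m - s + j$ and check $0 < \ell' < \ell \leq m - d - 1$, which exhibits $b_j$ as $b_{(s + \ell') \bmod m} \in B_s$.

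I do not expect any real obstacle here; this is essentially bookkeeping around the cyclic description of $B_s$. The only care required is in the modular arithmetic of part 1, namely verifying that the substitute exponent $\ell'$ stays within $[0, m - d - 1]$ and strictly below $\ell$, so that $b_j$ indeed appears as an element of $B_s$ sitting earlier in the cyclic order than $b_i$.
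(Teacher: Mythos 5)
Your proof is correct: both parts follow from the modular-arithmetic description of $B_s$ as a cyclic block of $m-d$ consecutive indices, and your case split (non-wrapping for part 2, wrapping for part 1) with the verification that the offset $\ell'$ stays in $[0,m-d-1]$ is exactly the required bookkeeping. The paper states this lemma as "an easy observation" and gives no proof, so your direct unpacking of the definition is precisely the intended argument (and you are right that the hypothesis $\sigma\subseteq B_s$ plays no role).
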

 
We now determine the necessary and sufficient conditions for $\si \cup \{a\} \in I$, where $a \in A$.

\begin{lem} \label{ind} For any $a \in A$, 
 $\si \cup \{a\} \in I  \iff \si \subseteq B_{s}$  for some $s \in [m]$. 
 \end{lem}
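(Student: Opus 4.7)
The plan is a direct unpacking of the definitions of $N(a)$ and $B_s$. In the context in which this lemma is applied (via Lemma \ref{lemma1}), one has $\sigma \subseteq B$, so $\sigma$ is automatically independent in the bipartite graph $G_m^d$. Consequently, for $a = a_k \in A$, the condition $\sigma \cup \{a_k\} \in I$ is equivalent to $\sigma \cap N(a_k) = \emptyset$, i.e.\ $\sigma \subseteq B \setminus N(a_k)$.

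For the forward direction, I would start with an $a_k \in A$ such that $\sigma \cup \{a_k\} \in I$ and expand $N(a_k) = \{b_k, b_{k+1}, \ldots, b_{k+d-1}\}$ (indices mod $m$). The complement $B \setminus N(a_k)$ is then the cyclic interval $\{b_{k+d}, b_{k+d+1}, \ldots, b_{k+d+(m-d-1)}\}$, which is precisely $B_{s}$ for $s \equiv k+d \pmod{m}$. Hence $\sigma \subseteq B_s$. If the convention that $b_s \in \sigma$ is to be enforced, one simply rotates $s$ forward to the smallest cyclic index in $\{k+d, k+d+1, \ldots\}$ for which $b_s \in \sigma$; this is still a valid starting index of a $B_{s'}$ that contains $\sigma$.

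For the backward direction, given $\sigma \subseteq B_s = \{b_s, \ldots, b_{s+m-d-1}\}$, I would take $a := a_{s-d \pmod m}$. A count modulo $m$ shows that $N(a_{s-d}) = \{b_{s-d}, b_{s-d+1}, \ldots, b_{s-1}\}$ and $B_s$ partition $B$ (they have sizes $d$ and $m-d$, and $(s-d) + d \equiv s$). Thus $\sigma \cap N(a) = \emptyset$, and since $\sigma \subseteq B$ has no internal edges, $\sigma \cup \{a\}$ is independent, i.e.\ lies in $I$.

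The lemma is essentially tautological once both sides are written out; the only point that needs care is the modular arithmetic, and in particular verifying the partition identity $B = N(a_{s-d}) \sqcup B_s$, which is a direct length and index check. I do not anticipate any genuine obstacle.
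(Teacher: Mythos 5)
Your proof is correct and takes essentially the same route as the paper's: for the forward direction both identify $B \setminus N(a_k)$ with the cyclic interval $B_{k+d}$ by a size/index count, and for the converse both choose the vertex $a_{s-d}$ (written $a_{s+m-d}$ in the paper) whose neighborhood is exactly the complement of $B_s$. The only difference is that you make the modular arithmetic and the rotation needed to enforce the convention $b_s \in \sigma$ explicit, which the paper leaves implicit.
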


\begin{proof} If $\si \cup \{a\} \in I$, then from lemma \ref{lemma1}, $\si \subseteq B\sm N(a)$. Since $|N(a)|=d$ and $|B_{s}|=m-d$,  we see that $\si \subseteq B_{s}$, for some $b_{s} \in \si$.

Conversely, let  there exist $s\in [m]$ with $\si \subseteq B_{s}$. Observe that $B_{s} \cap N(a_{s+m-d}) =\emptyset$. Therefore, $\si \cup \{a_{s+m-d}\} \in I$.
 \end{proof}
 
 \begin{rmk}\label{rmk1} Let $\si \subseteq B$. Here,
 $\si \in M \iff \si \not\subseteq B_{i}$, $\forall ~ i \in [m]$.
 Therefore, for any $b\in B$, if  $\si \in M$ and $b \nin \si$, then $\si \cup \{b\} \in M$ and if $\si \nin M$ and $b \in \si$, then $\si \sm \{b\} \nin M$.
 \end{rmk}

The following result deals with necessary conditions for $\si$ to be a critical cell.

\begin{lem}\label{lemma2}
If $\si \in C$, then $|\si| \geq 2$ and $\si =\{b_{j_1}, b_{j_2}, \ldots, b_{k}\}$, where $ j_1=1 < j_2 < \ldots < k$.
 \end{lem}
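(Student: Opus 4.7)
The plan is to combine Lemma~\ref{lemma1}, which already forces any critical cell to lie in $B$ and to dominate all of $A$, with a direct appeal to the round-two matching $\mu_2$, which processes the vertices $b_1, b_2, \ldots, b_m$ in that order. Granted Lemma~\ref{lemma1}, writing $\si = \{b_{j_1}, \ldots, b_k\}$ in increasing index order is automatic; the genuine content still to be proved is $j_1 = 1$ and $|\si| \geq 2$.

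For $j_1 = 1$, I would argue by contradiction and assume $b_1 \nin \si$. Because $\si \subseteq B$ and $B$ is independent, $\si \cup \{b_1\} \in I$, and the domination property $A \subseteq N(\si)$ is preserved after adjoining $b_1$. The contrapositive of Lemma~\ref{lemma1} then says $\si \cup \{b_1\} \in M$. Since $\mu_{b_1}$ is the first matching step used in round two, neither $\si$ nor $\si \cup \{b_1\}$ has yet been touched by $\mu_2$, so by the definition of $S_{b_1}$ one has $\si \in S_{b_1}$ and $\mu(\si) = \si \cup \{b_1\}$, contradicting $\si \in C$. Thus $b_1 \in \si$.

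For $|\si| \geq 2$, suppose $\si$ is a singleton. The previous step forces $\si = \{b_1\}$. Lemma~\ref{lemma1} then demands $A \subseteq N(b_1)$, and since $|N(b_1)| = d$ this is compatible with $|A| = m$ only when $d = m$. In the regime $d < m$ (the substantive case of Theorem~\ref{maintheorem}) one picks any $a_i \nin N(b_1)$; then $\{b_1\} \cup \{a_i\} \in I$, so Lemma~\ref{lemma1} places $\{b_1\}$ in some $\s_{a_j}$, contradicting $\{b_1\} \in C$. The boundary case $d = m$ corresponds to $t = 0$, $\alpha = d$ and to the complete bipartite graph $K_{m,m}$, which can be disposed of separately.

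The only delicate point is the verification that $\si \cup \{b_1\} \in M$, that is, that it has not already been consumed by round one: one must exclude it from $S_1$ (which would require $(\si \cup \{b_1\}) \cup \{a_i\} \in I$ for some $i$) and from $\mu_1(S_1)$ (which would require some $a_i \in \si \cup \{b_1\}$). Both failures are immediate from Lemma~\ref{lemma1} and the inclusion $\si \cup \{b_1\} \subseteq B$, so beyond this bookkeeping the argument is a clean application of the inductive structure of $\mu_2$.
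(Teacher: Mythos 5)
Your argument is correct and follows essentially the same route as the paper: the paper also matches $\si$ with $\si \cup \{b_1\}$ at the $b_1$ stage (using Remark~\ref{rmk1}, which is just the $B_s$-reformulation of your neighborhood/domination check via Lemma~\ref{ind}) to force $b_1 \in \si$, and rules out the singleton $\{b_1\}$ by noting it admits an $a$-extension, i.e.\ $\{b_1\} \subseteq B_1$. Your explicit flagging of the degenerate case $d=m$ (where $B_1=\emptyset$ and the singleton argument breaks) is a point the paper only handles implicitly by treating $K_{m,m}$ separately in the proof of Theorem~\ref{maintheorem}.
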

 \begin{proof}
 Since $\si \in M$, using Remark \ref{rmk1}, $\si \not\subset B_{s}$, for any $s\in [m]$. If $b_1 \nin \si$,
 then $\si \cup \{b_1\} \not\subset B_{s}$ $\forall ~ s\in [m]$. Thus, $\si, \si \cup \{b_{1}\} \in \s_{b_{1}}$,
 which implies that $\si$ is not a critical cell.  Thus, $b_{1}\in \si$. If $|\si|=1$, then $\si =\{b_1\} \subseteq B_{1}$, a contradiction.
  \end{proof}

Henceforth, all the cells $\sigma$ considered will satisfy the properties
\begin{equation}\label{eqn0}
 \si \in M \ \text{and} \
 \si = \{b_{j_1}, b_{j_2} ,\ldots, b_k\} \ \text{where} \ \  j_1=1 < j_2 < \ldots < k.\\
 \end{equation}
  
 \begin{lem}\label{lemma6} The following hold for any $\si$ satisfying Equation \ref{eqn0}.
 
 \begin{enumerate} 
 \item If $j_{2} \leq d+1$, $k =j_2+m-d-1$ and $b_{i} \nin \si$ for some $i \in [m] \sm \{1\}$, then 
 \begin{enumerate}
 \item $\{\sigma \cup \{b_i\}\} \setminus \{b_1\} \in M \ \forall \ i > k$ and $i \in \{2, \ldots, j_{2}-1\}$,

 \item $\{\sigma \cup \{b_i\}\} \setminus \{b_{j_2}\} \in M \ \forall \ i \in \{j_2+1, \ldots, d+1\}$.
 \end{enumerate}
 \item Let $s\geq 3$. If $\si \sm \{b_{j_{l}}\} \nin M$ for $l \in \{2, \ldots, s-1\}$, then $j_{l+1} \geq j_{l-1} +d+1$.
 
 \end{enumerate}
 \end{lem}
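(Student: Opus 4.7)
The central observation is that for any $\tau \subseteq B$, Remark~\ref{rmk1} gives $\tau \in M$ iff $\tau \not\subseteq B_s$ for all $s \in [m]$. Since $B \sm B_s = \{b_{s+m-d}, \ldots, b_{s-1}\}$ is a cyclic arc of $d$ consecutive indices, this is equivalent to: every cyclic arc of $d$ consecutive indices in $[m]$ meets the index set of $\tau$. Equivalently, calling a maximal run of indices in $[m]$ not appearing in $\tau$ a \emph{gap}, $\tau \in M$ iff every cyclic gap of $\tau$ has length at most $d-1$. Since $\si \in M$, all cyclic gaps of $\si$ are $\leq d-1$; the Part~1 hypotheses $j_2 \leq d+1$ and $k = j_2 + m - d - 1$ translate into the two gaps adjacent to $b_1$ having lengths $j_2 - 2 \leq d-1$ and $m - k = d + 1 - j_2 \leq d - 1$ respectively.

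For Part~1(a), write $\tau = (\si \cup \{b_i\}) \sm \{b_1\}$. Removing $b_1$ merges the two gaps adjacent to $b_1$ into a single cyclic gap of length $d$, and inserting $b_i$ splits some gap of $\si \sm \{b_1\}$. Interior gaps between $b_{j_l}$ and $b_{j_{l+1}}$, $l \geq 2$, remain unchanged and $\leq d-1$. When $i > k$, the new gaps from $b_k$ to $b_i$ and from $b_i$ wrapping to $b_{j_2}$ have lengths $i - k - 1$ and $(m - i) + (j_2 - 1)$; substituting $k + 1 \leq i \leq m$ and $m - k = d + 1 - j_2$ bounds both by $d - 1$. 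When $2 \leq i \leq j_2 - 1$, the new gaps from $b_i$ to $b_{j_2}$ and from $b_k$ wrapping to $b_i$ have lengths $j_2 - i - 1$ and $d + i - j_2$; the range of $i$ together with $j_2 \leq d+1$ again bounds both by $d-1$.

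For Part~1(b), write $\tau = (\si \cup \{b_i\}) \sm \{b_{j_2}\}$ with $j_2 + 1 \leq i \leq d+1$ and $b_i \nin \si$. Removing $b_{j_2}$ merges the gaps of lengths $j_2 - 2$ and $j_3 - j_2 - 1$ adjacent to $b_{j_2}$ into a single gap of length $j_3 - 2$ (when $\si = \{b_1, b_{j_2}\}$, so $m = d+1$, the claim is immediate since $|\tau| = 2$ while each $B_s$ is a singleton). If $j_2 < i < j_3$, then $b_i$ splits the merged gap into pieces $i - 2 \leq d - 1$ (from $i \leq d+1$) and $j_3 - i - 1 < j_3 - j_2 - 1 \leq d - 1$. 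If $i > j_3$, then $i \leq d + 1$ forces $j_3 \leq d$, so the unsplit merged gap $j_3 - 2 \leq d - 2$ is already safe and $b_i$ only splits some other existing gap of $\si$ into strictly smaller pieces; the sub-case $i > k$ (which requires $m < 2d$) is handled by a short substitution using $k \geq m - d + 1$.

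For Part~2, suppose $\si \sm \{b_{j_l}\} \nin M$. Then some cyclic arc $A$ of $d$ consecutive indices in $[m]$ is disjoint from $\si \sm \{b_{j_l}\}$; since $\si \in M$, this arc must contain $j_l$ (else it would also be disjoint from $\si$). For $l \in \{2, \ldots, s-1\}$, the set $\{j_{l-1} + 1, \ldots, j_{l+1} - 1\}$ is the maximal non-wrapping interval containing $j_l$ and no other element of $\si$, so $A \subseteq \{j_{l-1}+1, \ldots, j_{l+1}-1\}$; comparing lengths yields $j_{l+1} - j_{l-1} - 1 \geq d$, i.e., $j_{l+1} \geq j_{l-1} + d + 1$. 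The main obstacle is organizing the case analysis in Part~1(b): the position of $i$ relative to $j_3$, and (when $m < 2d$) relative to $k$, controls which gap of $\si \sm \{b_{j_2}\}$ is split by $b_i$, and the cyclic structure makes it easy to miscount a sub-case without care.
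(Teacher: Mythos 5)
Your proof is correct and takes essentially the same route as the paper: both arguments verify membership in $M$ by ruling out containment in each arc $B_s$, which you phrase equivalently as bounding every cyclic gap of the modified set by $d-1$. The gap-length bookkeeping is a clean repackaging of the paper's case analysis on $s$, and all of your estimates (including the merged-gap computations after deleting $b_1$ or $b_{j_2}$ and the arc-containment argument in Part 2) check out.
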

 
 \begin{proof} Since $\sigma \in M,$ from Remark \ref{rmk1}, $\si \not\subseteq B_{s}, ~\forall ~ s\in [m]$. \hfill(i)
 \begin{enumerate} 
 \item \begin{enumerate}
 \item Assume that $\sigma \cup \{b_i\} \setminus \{b_1\} \subseteq B_{s}$, $ s\neq 1$.  If $i>k$, then $ s>j_2$ (as $k=m+j_2-d-1$). Since, $b_{j_2} \in B_{s}$, using Lemma \ref{rmk0}, $b_1\in B_{s}$,
  {\it i.e.} $\si \subseteq B_{s}$, contradicting (i). 
%Hence,  $\{\sigma \cup \{b_i\}\} \setminus \{b_1\} \in M$ $\forall \ i> k$.

Since $i+m-d-1 < j_2+m-d-1=k$,  $\forall ~i \in \{2, \dots , j_2-1\}$, $b_{k}\in B_s$ implies that $ s >i>1$. From Lemma \ref{rmk0}, $b_i \in B_s \Rightarrow b_{1} \in B_{s}$, {\it i.e.} $\si \subseteq B_{s}$, a contradiction.  
  
\item Suppose that $\si\cup \{b_{i}\}\setminus \{b_{j_2}\} \subseteq B_{s}$ with $s \neq j_2$. Clearly, $s \neq 1$. If $ s \in \{j_2+1, \dots , d+1\}$, then $s+m-d-1 \leq m$ which implies that $b _{1} \nin B_{s}$. If $s>d+1$, then $b_{1}, ~b_i \in B_s$ implies that $b_{j_2} \in B_{s}$. In both the cases, $\si \nin M$. Therefore, there does not exist any $ s\in [m]$ such that $\sigma \cup \{b_{i}\}\setminus \{b_{j_2}\} \ \subseteq B_{s}$.
 \end{enumerate}
 
 \item Let $\si \sm \{b_{j_{l}}\} \subseteq B_{q}$, $q \neq j_{l}$. If $q \leq j_{l-1}$, then $b_{j_{l+1}} \in B_{q} \Rightarrow b_{j_{l}} \in B_{q}$ and if $q >j_{l+1}$, then $b_{j_{l+1}}\in B_{q} \Rightarrow b_{j_{l}}\in B_{q}$. In both these cases, $\si \nin M$. Therefore, $q=j_{l+1}$.  Here, $j_{l+1} + m -(d+1) ~(\text{mod}~ m) \geq j_{l-1}$ and $j_{l-1} <j_{l+1}$ imply that $j_{l+1} > d+1$ and $j_{l+1} \geq j_{l-1}+d+1$.
 \end{enumerate}
  \end{proof}

 We  now determine the necessary and sufficient conditions for
 $\sigma \notin \s_{b_{i}}$, $\forall ~ i \in \{1, \dots ,d+1\}$.
 
\begin{lem} \label{lemma3} Let $\si$ satisfy $(1)$.  Then,
$\sigma \notin \s_{b_i}\ \forall ~i \in \{1,\dots , d+1\}$ if and only if

\noindent $(1)$ $ j_2 \leq d+1$, ~
$(2)$ $ \sigma \setminus \{b_1\} \nin M,$ ~
$(3)$  $k = j_2 + m-d-1$ ~ and ~
$(4)$  $\sigma \setminus \{b_{j_2}\}\nin M$.
\end{lem}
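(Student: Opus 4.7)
The plan is to analyze the sequential matching $\mu$ as it processes the vertices $b_1, b_2, \ldots, b_{d+1}$ in order. At each step $i$, assuming $\si$ has not yet been consumed by an earlier step, $\si \in \s_{b_i}$ precisely when its $b_i$-partner (namely $\si \cup \{b_i\}$ if $b_i \nin \si$, and $\si \sm \{b_i\}$ if $b_i \in \si$) lies in $M$ and is itself still available. I would walk through $i = 1, \ldots, d+1$ in order and convert each ``failure to match'' into one of the four listed conditions, and conversely.

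For the \emph{necessity} direction, step $i = 1$ is immediate: since $b_1 \in \si$ by~\eqref{eqn0}, $\si \nin \s_{b_1}$ forces $\si \sm \{b_1\} \nin M$, which is condition (2). Suppose next, for contradiction, that $j_2 > d+1$; then $b_i \nin \si$ and $\si \cup \{b_i\} \in M$ for every $i \in \{2,\ldots,d+1\}$ (the latter because $\si \cup \{b_i\} \subseteq B$ extends $\si \in M$, so it lies in no $B_s$). An induction on $i$ using (2) together with Lemma~\ref{rmk0} to track which sets $B_s$ contain $\si \sm \{b_1\}$ shows that $\si \cup \{b_i\}$ cannot be absorbed by $\mu_{b_1}$ at every step, so at some step $i \leq d+1$ the partner $\si \cup \{b_i\}$ remains available and forces $\si \in \s_{b_i}$ --- a contradiction. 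Hence (1). The analogous argument applied at step $i = j_2$ (where $b_{j_2} \in \si$) yields (4). Condition (3) is then extracted from the remaining steps $i \in \{2,\ldots,j_2-1\} \cup \{j_2+1,\ldots,d+1\}$: any other value of $k$ would produce, via Lemma~\ref{rmk0}, some $s$ for which $(\si \cup \{b_i\}) \sm \{b_1\}$ or $(\si \cup \{b_i\}) \sm \{b_{j_2}\}$ lies in $B_s$, so $\si \cup \{b_i\}$ would remain available at step $i$ and $\si$ would be matched.

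For the \emph{sufficiency} direction, assume (1)--(4). Step $i = 1$ is handled immediately by (2), giving $\si \nin \s_{b_1}$. For $i \in \{2,\ldots,j_2-1\}$, Lemma~\ref{lemma6}(1)(a) --- whose hypotheses are precisely (1) and (3) --- yields $(\si \cup \{b_i\}) \sm \{b_1\} \in M$. Combined with $\si \cup \{b_i\} \in M$, this means $\si \cup \{b_i\} \in \mu_{b_1}(S_{b_1})$, so $\si \cup \{b_i\}$ is no longer available at step $i$ and hence $\si \nin \s_{b_i}$. Symmetrically, Lemma~\ref{lemma6}(1)(b) handles $i \in \{j_2+1,\ldots,d+1\}$, the partner being consumed at step $j_2$ via $\mu_{b_{j_2}}$. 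Finally, step $i = j_2$ is closed directly by (4).

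The main obstacle is the necessity direction, and in particular the derivation of (3): pinning down the exact value of $k$ requires delicate bookkeeping of which cells have been consumed at each prior step of the matching, and the three cyclic positions of $b_i$ relative to $\{b_1, b_{j_2}, b_k\}$ must each be analyzed via the wrap-around structure of $B_s$ from Lemma~\ref{rmk0}. Availability at step $i$ is a state that depends on the full history of prior pairings, which is what makes the case analysis the principal technical burden.
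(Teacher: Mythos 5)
Your overall skeleton -- walking through the steps $b_1,\ldots,b_{d+1}$ of the sequential matching and translating ``$\si$ is not matched at step $i$'' into the four conditions -- is exactly the paper's strategy, but the proposal leaves the genuinely hard parts undone and in one place takes a wrong turn. For condition $(1)$ you propose an unsubstantiated ``induction on $i$'' showing that some partner $\si\cup\{b_i\}$ must remain available; this is both unnecessary and unlikely to close cleanly, because the actual contradiction when $j_2\geq d+2$ is not with $\si\nin\s_{b_i}$ at all: in that case $j_2+m-d-1>m$, so $B_{j_2}$ wraps around past $b_m$ to contain $b_1$, hence $\si\subseteq B_{j_2}$, which contradicts the standing hypothesis $\si\in M$ from \eqref{eqn0} in one line. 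More seriously, your treatment of everything that happens at step $j_2$ -- condition $(4)$ (``the analogous argument yields $(4)$'') and the sufficiency step for $i\in\{j_2+1,\ldots,d+1\}$ (``the partner being consumed at step $j_2$ via $\mu_{b_{j_2}}$'') -- asserts precisely the statement that needs proof. Unlike step $1$, where $S_{b_1}$ is defined directly on $M$, membership in $\s_{b_{j_2}}$ requires that \emph{both} the cell and its $b_{j_2}$-partner survive steps $1,\ldots,j_2-1$ unmatched. The paper discharges this by introducing the doubly-augmented cells $\{\si\cup\{b_{k'},b_i\}\}\sm\{b_1\}$ and $\{\si\cup\{b_l,b_i\}\}\sm\{b_1,b_{j_2}\}$ and running a three-way case analysis on the position of $s$ via Lemma \ref{rmk0} to show these lie in $M$, which forces the step-$i$ partners to be consumed at step $1$ for every $i<j_2$. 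You correctly identify this availability bookkeeping as ``the principal technical burden,'' but the proposal contains no mechanism for carrying it out, and without it neither direction of the lemma is established.

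Two smaller omissions in the sufficiency direction: the range $i\in\{j_2+1,\ldots,d+1\}$ splits into $i>k$ (where the partner is killed at step $1$ via Lemma \ref{lemma6}~$1(a)$, not at step $j_2$) and $i\leq k$ (where the step-$j_2$ argument above is needed), and the degenerate case $j_2=k$ (which occurs when $m=d+1$) needs separate handling since then $\si\sm\{b_1\}=\{b_{j_2}\}\subseteq B_{j_2}$ trivially and the auxiliary cells degenerate. For condition $(3)$, you also need to name the specific index $k'=k-m+d+1<j_2\leq d+1$ at which $\si$ itself would get matched, and verify that $\si$ and $\si\cup\{b_{k'}\}$ both reach step $k'$ unconsumed; the claim that ``$\si$ would be matched'' at some unspecified step does not by itself contradict the hypothesis unless that step is $\leq d+1$.
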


\begin{proof}$(\Longrightarrow)$ We first assume that 
\begin{equation} \label{eqn2.5}
\sigma \notin \s_{b_i} ~\forall ~i \in \{1,\dots , d+1\}.
\end{equation}

\begin{enumerate}
\item[(1)] If $j_2 \geq d+2$, then $j_{2}+m -d-1 >m$ implies that $\sigma \subseteq
  B_{j_2}$, a contradiction. 
  %\smallskip
  
\item[(2)] $\si \sm \{b_1\} \in M$ implies that $\sigma \in \s_{b_1},$ a contradiction. Hence, $\sigma \setminus \{b_1\} \nin M$.
   
%\smallskip
  
\item[(3)] From $(2)$ and Remark \ref{rmk1}, $\si \sm \{b_1\} \subseteq B_{j_2}$. Thus, $k \leq j_2 + m-d-1 \leq m$. Let $k'= k-m+d+1$ and $k' <j_2$. Since, $k' \leq 1$ implies that $k \leq m+d$, {\it i.e.} $ \si \subseteq B_{1}$, we get $1 <k' <j_2$, $b_{k'} \nin \si$ and $\si \cup \{b_{k'}\}  \sm \{b_{1}\} \subseteq B_{k'}$. Therefore, $\si \cup \{b_{k'}\} \notin \s_{b_1}$.  Since, $d \geq 1$, using equation \ref{eqn2.5}, we get $k' > 2$. 
Let $\si'= \{ \sigma \cup \{b_{k'}, b_{i}\} \} \setminus \{b_1\} \in M$ for $i \in \{2, \ldots , k'-1\}$. 

Let $\si' \subseteq B_{s} $, $s \in [m]\sm \{ 1\}.$ If $s$ is either $i$ or $k'$, then $ b_{k}$ or $b_{i} \nin B_{s}$, respectively and if $s \geq j_{2}$, then $\si \subseteq B_{s}$. Thus $\si' $ and $\si' \cup \{b_{1}\} \in M$. Therefore, 
  $\si \cup \{b_{k'}, b_{i}\} \in  \s_{b_1}$. Here, $\si, \ \si \cup \{b_{k'}\} \in \s_{b_{k'}}$, an impossibility as $k' <j_2 \leq d+1$.
    Hence, $k'=j_2.$
 
%   \smallskip  
\item[(4)] Suppose  $\sigma \setminus \{b_{j_2}\}  \in M$. Here, $j_2 \neq k$  (as $j_2=k \Rightarrow \si \sm \{b_{j_{2}}\} =\{b_{1}\} \subseteq B_{1}$).

Using, $(2)$, we get $\si \sm \{b_{1}, b_{j_2}\} \nin M$, {\it i.e.}, $\sigma \setminus \{b_{j_2}\}  \notin \s_{b_1} $. From the hypothesis, $\si \nin \s_{b_{2}}$. Thus, $j_2 > 2$.  Let  $i \in \{2, \ldots , j_2-1\}$.

 Here, $b_{i} \nin \si$ and $\{\si \sm \{b_{j_2}\}\} \cup \{b_{i}\} \in M$ (since $\sigma \setminus \{b_{j_2}\}  \in M$). Let $\si' = \{\si \cup \{b_{i}\}\} \sm \{b_{1}, b_{j_2}\} $. 
If possible, let $\si' \subseteq B_{s}$ for some $ s \in [m]\sm \{1,j_2\}$. For $1 <s \leq i$,  $i < j_2 $ implies that $i+m-d-1 < j_2+m-d-1 = k$. Here, $b_{k} \nin B_s$. If $s>i$, then from Lemma \ref{rmk0} $(1)$,  $\{\sigma \cup \{b_i\}\} \setminus \{b_{j_2}\} \subseteq B_{s}$ (as $b_{i} \in B_{s}$), a contradiction.  
Thus, $\si' \in M$.
Therefore,  $\{\sigma \cup \{b_i\}\} \setminus \{b_{j_2}\} \in \s_{b_{1}}$ and $\sigma \setminus \{b_{j_2}\} \notin \s_{b_i}$ $\forall ~ i \in \{1, \ldots , j_2-1\}.$ This shows that $\sigma, \si \sm\{b_{j_2}\} \in \s_{b_{j_2}}$, a contradiction.
  Hence, $\sigma \setminus \{b_{j_2}\} \nin M.$ 
 \end{enumerate}

\noindent $(\Longleftarrow)$ Let $(1)$, $(2)$, $(3)$ and $(4)$ hold.  From $(2)$ and $(4)$, we see that
$\sigma \notin \s_{b_1}, \s_{b_{j_2}}.$  

If $j_2 >2$, then for all $i \in \{2, \ldots , j_{2}-1\}$, we have $b_{i} \nin \si$,  $\{\sigma \cup \{b_i \}\} \setminus \{b_1\} \in M$ (by Lemma \ref{lemma6} $1(a)$) and  $\sigma \cup \{b_i\} \in M$ (by Remark \ref{rmk1}). 
Therefore, 
$\sigma \notin \s_{b_{i}} ~\forall ~ i \leq  j_2, ~ j_{2}>1.$

    If $j_2 = d+1$, the result follows.
 Now, let $j_2 < d+1$ and $l \in \{j_2+1, \ldots , d+1\}$.  Observe that $\si \sm\{b_{j_2}\} \nin M$ implies that $j_3 \geq 1+d+1$ (by Lemma \ref{lemma6} $(2)$). Thus, $b_{l} \nin \si$.
 
 If $j_2 = k$, then $m = d+1$, $\si =\{b_1, b_{j_2}\}$ and $B_{s}=\{b_{s}\} \ \forall \ s$. Therefore, $\si \cup\{b_{l}\}$ and 
 $\{\si \cup\{b_{l}\}\}\sm \{b_{1}\} \in M$, thereby showing that $\si \cup\{b_{l}\} \in \s_{b_{1}}$. Thus,  $\si \nin \s_{b_{l}} ~\forall ~ l \leq d+1$.

Now, consider the case when $j_2 \neq k$, {\it i.e.} $j_{2} <k$.  We have the following two cases.
 
 \smallskip
 
 \begin{enumerate}
 \item[(i)] $l > k$.\\ Here, $b_{l} \nin \si$, $\si \cup \{b_{l}\} \in M$ and $\{\sigma \cup \{b_l\} \} \setminus \{b_1\} \in M$
(Lemma \ref{lemma6} $1(a)$).
 Thus, $\sigma \notin \s_{b_l}$.

 \item[(ii)] $l \leq k$.  
 
 $\si^{1}= \si \cup \{b_{l}\} \in M$ (Remark \ref{rmk1}) and $ \si^{1} \sm \{b_1\} \subseteq B_{j_2}$ (as $k =j_2+m-d-1$).
Therefore, $\si^{1}\sm \{b_1, b_{j_2}\} \nin M$ and
 $\sigma^{1}$, $\sigma^{1} \setminus \{b_{j_2}\} \nin \s_{b_1}$. Thus, for  $j_2 = 2$, $\si^{1}, \sigma^{1} \sm \{b_{2}\} \in \s_{b_2}$.  Let $j_{2} \neq 2$ and $i \in \{2, \ldots, j_2-1\}.$ 
 Suppose $\si^{2}=\{\sigma^{1}\cup \{b_i\} \} \setminus \{b_1, b_{j_2}\} \subseteq B_{s}$, $s \neq 1, j_2$. 
If $s \leq i $, then  $b_{k} \nin B_{s}$ (as $k=j_2+m-(d+1)$).  
If $ i < s \leq l$, then $l \leq d+1$ implies that $s+m -(d+1) \leq m$. Here $b_{i} \nin B_{s}$. 
 If $s >l$, then, from Lemma \ref{rmk0}, $b_{1}, b_{j_{2}} \in B_{s}$, {\it i.e.} $\si \subseteq B_{s}$. Therefore, $\{\si \cup \{b_{l}, b_{i}\}\} \sm \{b_{1}, b_{j_{2}}\} \in M$ $\forall \ i \in \{2, \ldots , j_{2}-1\}$ and $l \in \{j_2+1, \ldots , d+1\}$. 
Thus,
 $\sigma\cup \{b_{l},b_i\} $ and  $\{\sigma \cup \{b_{l},b_i\} \} \setminus \{b_{j_2}\}   \in \s_{b_{1}}$.
 We now conclude that $\si \cup \{b_{l}\}$ and $\{\si \cup \{b_{l}\}\}\sm \{b_{j_2}\} \nin \s_{b_i}$ $\forall \ i<j_2$ which shows that $\si \cup \{b_{l}\} \in \s_{b_{j_2}}$. Therefore,  $\sigma \notin \s_{b_l}$ $\forall \ l \leq d+1$.  This completes the proof.
  \end{enumerate}
 \end{proof}

Let $i_1=1$ and $j_{1} \in [m]$, $i_r = 1+ (r-1)(d+1)$ and $j_r = j_1 + (r-1)(d+1)$, $r \in \{2, \ldots , t\}$. For each $l \in \{1,\ldots , t\}$, $\si$ is said to satisfy the property $P_{l}$ if 
 \begin{itemize}
 \item[(a)] $\sigma = \{b_{1}, b_{j_1}, \ldots, b_{i_l}, b_{j_l}, \ldots, b_k\}$, $j _1 \leq d+1$,  $k =m+j_1-d-1$ and
  \item[(b)]  $\sigma \setminus \{b_{i_r}\}$,  $\sigma \setminus \{b_{j_r}\}\nin M$, $\forall \ r \in \{1,\ldots, l\}.$
 \end{itemize}
 
Observe that $1 <j_{1}<i_{2} <j_{2} <\ldots i_{l} < j_{l} \leq \dots   \leq k$. Now, we extend Lemma \ref{lemma3}.

\begin{lem} \label{lemma4} Let $l \in \{1,\ldots , t\}$.
 $\sigma \notin \s_{b_{i}} \ \forall \ i \in \{ 1, \ldots ,l(d+1)\}$ if and only if
 $\sigma$ satisfies $P_{l}$.
\end{lem}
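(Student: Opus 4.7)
The plan is to prove this by induction on $l$, using Lemma \ref{lemma3} as the base case and mirroring its structure at each inductive step. For $l=1$, property $P_1$ reads: $\sigma=\{b_1,b_{j_1},\ldots,b_k\}$ with $j_1\leq d+1$, $k=m+j_1-d-1$, and $\sigma\setminus\{b_1\},\sigma\setminus\{b_{j_1}\}\notin M$. This is precisely the conclusion of Lemma \ref{lemma3} after renaming $j_2$ to $j_1$, so the base case is immediate.

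For the inductive step, assume the result for $l-1$ and suppose $\sigma\notin\s_{b_i}$ for all $i\in\{1,\ldots,l(d+1)\}$. In particular it holds for $i\leq(l-1)(d+1)$, so by the inductive hypothesis $\sigma$ satisfies $P_{l-1}$, giving the initial segment $\{b_1,b_{j_1},b_{i_2},b_{j_2},\ldots,b_{i_{l-1}},b_{j_{l-1}},\ldots,b_k\}$ with the required non-membership conditions through index $l-1$. Now I would use the additional hypothesis $\sigma\notin\s_{b_i}$ for $i\in\{i_l,\ldots,i_l+d\}$ to extract information about the next pair $(i_l,j_l)$. The key observations are: (i) since $\sigma\setminus\{b_{j_{l-1}}\}\notin M$, Lemma \ref{lemma6}(2) forces the next element of $\sigma$ after $b_{j_{l-1}}$ to lie at distance at least $d+1$ from $b_{i_{l-1}}$, so $b_{i_l}\in\sigma$; (ii) running the same arguments as in Lemma \ref{lemma3} but shifted by $(l-1)(d+1)$ shows that if $b_{i_l}\in\sigma$ and $\sigma\setminus\{b_{i_l}\}\in M$ then $\sigma\in\s_{b_{i_l}}$, forcing $\sigma\setminus\{b_{i_l}\}\notin M$; and (iii) the non-membership for $i\in\{i_l+1,\ldots,i_l+d\}$ then compels $b_{j_l}\in\sigma$ with $j_l=j_1+(l-1)(d+1)$ and $\sigma\setminus\{b_{j_l}\}\notin M$, mimicking exactly the forward direction of Lemma \ref{lemma3}. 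This yields $P_l$.

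For the reverse direction, assume $P_l$. The inductive hypothesis gives $\sigma\notin\s_{b_i}$ for $i\leq(l-1)(d+1)$, so only $i\in\{i_l,\ldots,i_l+d\}$ remain. Here I would parallel the reverse direction of Lemma \ref{lemma3}, translating the argument to the local window around the pair $(b_{i_l},b_{j_l})$: the equalities $k=m+j_1-d-1$ and $j_l=j_1+(l-1)(d+1)$ ensure that the analogue of $B_{j_l}$ controls $\sigma\setminus\{b_{i_l}\}$ in the same way $B_{j_2}$ controlled $\sigma\setminus\{b_1\}$ in Lemma \ref{lemma3}, and one explicitly constructs, for each $i$ in this block, a larger cell witnessing $\sigma\in\s_{b_{j_l}}$ (or $\s_{b_{i_l}}$) whose non-inclusion in $B_s$ for any $s$ is verified using Lemma \ref{rmk0}. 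The main obstacle will be the bookkeeping in case (ii) of the forward step, particularly handling the boundary subcases where $j_l$ coincides with $k$ or where some $B_s$ wraps around modulo $m$, exactly as in the last cases of Lemma \ref{lemma3}; this is routine but requires care to verify that every candidate $B_s$ is ruled out by the structure $P_{l-1}$ already provides.
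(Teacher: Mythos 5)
Your overall strategy --- induction on $l$ with Lemma \ref{lemma3} as the base case and a two-stage inductive step that first pins down $b_{i_l}$ and then $b_{j_l}$ --- is exactly the paper's (the paper isolates your first stage as the auxiliary Proposition \ref{prop2}). However, the justification you offer for the pivotal steps is not sufficient, and the gap sits precisely where the work of the proof lies. In (i) you assert $b_{i_l}\in\sigma$ because Lemma \ref{lemma6}(2) forces the next element of $\sigma$ after $b_{j_{l-1}}$ to lie at distance at least $d+1$ from $b_{i_{l-1}}$. That lemma yields only the lower bound $p\geq i_{l-1}+d+1=i_l$ for the next occupied index $p$; it does not exclude $p>i_l$. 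To obtain equality one must argue by contradiction: if $p>i_l$, then the augmented cell $\sigma\cup\{b_{i_l}\}$ satisfies $P_{l-1}$ (this requires the explicit containments $\sigma\cup\{b_{i_l}\}\setminus\{b_s\}\subseteq B_{j_r}$ or $B_{i_{r+1}}$ recorded in Equation \ref{eq1}), so the inductive hypothesis applied \emph{to that auxiliary cell} certifies it is unmatched at $b_1,\ldots,b_{(l-1)(d+1)}$, whence $\sigma$ would be matched with it at $b_{i_l}$, contradicting $\sigma\notin\s_{b_{i_l}}$.

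The same mechanism --- check that an auxiliary cell ($\sigma\setminus\{b_{i_l}\}$, $\sigma\cup\{b_{j_l},b_i\}$, $\sigma\cup\{b_i\}\setminus\{b_{j_l}\}$, etc.) satisfies $P_{l-1}$ or the conditions of Proposition \ref{prop2}, and then invoke the statement being proved \emph{for that cell} to certify it is not consumed by the matching at any earlier vertex --- is what drives your steps (ii) and (iii) and the entire reverse direction. Your sketch of the reverse direction reduces the verification to ``non-inclusion in $B_s$ for any $s$,'' i.e.\ membership in $M$; but membership in $M$ alone does not show that a witness cell realizes (or blocks) a match at $b_{i_l}$ or $b_{j_l}$: one must also know the witness has survived the matching at all earlier $b_j$, and for $l\geq 2$ that is exactly the inductive statement, not a $B_s$ computation. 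So the inductive step is not obtained by ``shifting Lemma \ref{lemma3} by $(l-1)(d+1)$''; it is obtained by applying the lemma under proof to a family of neighbouring cells. Once that mechanism is made explicit, the bookkeeping you defer does go through, essentially as in the paper.
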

\begin{proof} From Lemma \ref{lemma3}, the result holds for $l=1$.
 Inductively, assume that the result holds for all  $1\leq r \leq l < t$.  For such simplices $\si$, we first prove the following:
 
 \begin{prop} \label{prop2} 
  $\sigma \notin  \s_{b_{i}} \ \forall  \ i \in \{1, \ldots, i_{l+1}\}$ if and only if
~ $(1)$  $b_{i_{l+1}} \in \si$, ~
  $(2)$ $\si \sm \{b_{i_{l+1}}\}\nin M$ ~ and ~
  $(3)$ $\ b _s \nin \si$ $\forall ~ s \in  \{ x \in [m] \sm \{i_{l+1}\} \ | \ j_{l} < x < j_{l+1}\}$.
  \end{prop}
  
 \noindent {\it Proof :} ${\emph (\Longrightarrow)}$ Let $\sigma \notin  \s_{b_{i}}, $ for  $ i \in \{1, \ldots , i_{l+1}\}$.  Since $\si$ satisfies $P_{l}$, $j_1 \leq d+1$ and $k=j_1+m-(d+1)$. Further, $l <t$ implies that $j_{l}< i_{l+1} <k$.
  \begin{enumerate}
\item[(1)]  Let $p\in \{j_{l}+1, \ldots, k\}$ be the smallest integer such that $b_p \in \si$. Since $\sigma \setminus \{b_{j_l}\} \nin M$, $p \geq i_{l} +d+1 =i_{l+1}$ (Lemma \ref{lemma6} $(2)$).
 If possible, let $p >i_{l+1}$.
 
 Here, $b_{i_{l+1}} \notin \sigma$ as $j_{l}< i_{l+1}$. Observe that $j_r +m-d-1 \ (\text{mod}\ m) = j_{r-1}$  and $i_{r+1} +m-d-1 \ (\text{mod}\ m) = i_r$ for $2 \leq r \leq l$. Therefore,  
   
 \begin{equation}\label{eq1}
  \si \cup \{b_{i_{l+1}}\} \sm \{b_{s}\} \subseteq \begin{cases}
    B_{j_{r}}, &  ~~s=i_{r}~~r \in \{1,\ldots, l\}  \\
   B_{i_{r+1}}, &  ~~s=j_{r}~~r \in \{1,\ldots, l-1\}  \\
   B_{i_{l+1}}, &  ~~s=j_{l}. \\
\end{cases}
\end{equation}

\medskip

  $\si \cup \{b_{i_{l+1}}\}$ satisfies $P_{l}$. Therefore, $\sigma  \cup \{b_{i_{l+1}}\} \notin \s_{b_i} \ \forall \ i \in \{1, \ldots, l(d+1)\}$, by the induction argument for $\si \cup \{b_{i_{l+1}}\}$. Since, $\sigma \notin \s_{b_i}$ $\forall ~ i \leq l(d+1)$, we see that $\sigma \in \s_{b_{i_{l+1}}}$, a contradiction. 
Therefore, $b_{1+l(d+1)} \in \sigma$. 

  \item[(2)]  
  $\si \sm \{b_{i_{l+1}}\}$ satisfies both properties of $P_{l}$. Therefore, $\si \sm \{b_{i_{l+1}}\}\nin \s_{b_j}  \ \forall \ j \in \{1, \ldots, l(d+1)\}$. Since, $\si \nin \s_{b_{i_{l+1}}}$, we see that $\si \sm \{b_{i_{l+1}}\} \nin M$.

\item[(3)]
   Let $q$ be the smallest integer in $\{i_{l+1} +1, \ldots,  k\}$ such that $b_{q} \in \sigma$.  
   Since $\si \sm \{b_{i_{l+1}}\} \nin M$, from Lemma \ref{lemma6} $(2)$, $q \geq j_{l}+d+1$.  As $j_{l+1} -j_{l}= d+1$, $q \geq j_{l+1}$. This proves $(3)$.
   \end{enumerate}
   
\noindent ${\emph (\Longleftarrow)}$  Conversely, let $(1)$, $(2)$ and $(3)$ of Proposition \ref{prop2} hold.

 $\sigma \setminus \{b_{i_{l+1}}\}\nin M$ implies that $\sigma \notin \s_{b_{i_{l+1}}}$.  Since
   $\sigma \notin \s_{b_j} ~\forall \ j \in \{1, \ldots, l(d+1)\}$ by the hypothesis, the result follows, thereby proving Proposition \ref{prop2}.
To complete the induction step, we show that $\si \nin \s_{b_{i}}$, $\forall ~ i \leq (l+1)(d+1)$ if and only if it satisfies $P_{l+1}$.

 \medskip
 
 \noindent ${\emph (\Longrightarrow)}$  Let $\sigma \notin  \s_{b_j}$, $\forall ~ 1 \leq j \leq (l+1)(d+1)$ and $b_{q} \in \si$ for a least element $q$ in $\{1+i_{l+1}, \ldots , k\}$. Since $\si \sm \{b_{i_{l+1}}\} \nin M$ (Proposition \ref{prop2}), from Lemma \ref{lemma6}$(2)$, $q \geq j_{l} +d+1$.  We now prove:

\begin{enumerate}
\item[(i)] $q=j_{1}+l(d+1)$ {\it i.e.}, $b_{j_{l+1}} \in \si$.

If $q >j_{l+1}$, then $b_{j_{l+1}}\nin \si$. Let $\si'= \si \cup \ \{b_{j_{l+1}}\}$. For each $b_{s} \in \si$, $s \leq j_{l}$,
      $\si' \sm \{b_{s}\}$ satisfies equation \ref{eq1} and  $\si' \sm \{b_{s}\} \nin M$.  Since $q +m-(d+1) ~(\text{mod}~m) >j_{1}+(l-1)(d+1)=j_{l}$, we see that $\si ' \sm \{b_{i_{l+1}}\} \subseteq B_{j_{l+1}}$, thereby satisfying $(2)$. Since
Proposition \ref{prop2} $(3)$ is true by the construction,   
 $\si \cup \{b_{j_{l+1}}\} \notin \s_{b_i} ~ \forall ~i \in \{1, \ldots , i_{l+1}\}$ (Proposition \ref{prop2}). 

For $i \in \{i_{l+1} +1, \ldots , j_{l+1}-1\}$, $b_{i} \notin \si \cup \{b_{j_{l+1}}\} $ and $\si \cup \{b_{j_{l+1}}, b_{i}\} \in M$. Further, $\si \cup \{b_{j_{l+1}}, b_{i}\} \sm \{b_{s}\}$ satisfies equation \ref{eq1}, $\forall \ s \leq j_{l}$. 
Therefore, $\si \cup \{b_{j_{l+1}}, b_{i}\}$ satisfies $P_{l}$ and by the induction argument $\si \cup \{b_{j_{l+1}}, b_{i}\} \nin \s_{b_j}$ $\forall \  j \leq l(d+1)$. 
Since $\si \cup \{b_{j_{l+1}}, b_{i}\} \sm \{b_{i_{l+1}}\} \in M$ (by Lemma \ref{lemma6}$(2)$), using Proposition \ref{prop2}, we get  $\si \cup \{b_{j_{l+1}}, b_{i}\} \sm \{b_{i_{l+1}}\}\in \s_{b_{i_{l+1}}}.$ Thus, $\si\cup\{b_{j_{l+1}}\} \nin \s_{b_j}$, $\forall \ j <j_{l+1}$, which implies that $\si \in \s_{b_{j_{l+1}}}$. But, $\si \nin \s_{b_{i}} \ \forall \ i \leq (l+1)(d+1)$. Since $j_1 +l(d+1) \leq (l+1)(d+1)$, this is impossible. Hence, $q=j_{1}+ l(d+1)$.

\item[(ii)] $\si \sm \{b_{j_{l+1}}\} \nin M$.  Here, either $j_{l+1} =k $ or $j_{l+1}<k$.
\begin{enumerate}
\item[(a)] Let $j_{l+1}=k$. Since $1+m-d-1  \geq 1+(t-1)(d+1) \geq i_{l+1}$, we see that $\si \sm \{b_{j_{l+1}}\} \subseteq B_{1}$.

\item[(b)] Let $j_{l+1}<k$. Suppose that $\si \sm \{b_{j_{l+1}}\} \in M$.

 For each $ s \leq i_{l+1}$, $\si \sm \{b_{s}\} \nin M \Rightarrow \si \sm \{b_{s}, b_{j_{l+1}}\} \nin M$. $\si \sm \{b_{j_{l+1}}\}$ satisfies $P_{l}$ (since $j_{l+1} <k$) and also the three conditions in Proposition \ref{prop2}. Proposition \ref{prop2} therefore shows that $\si \sm \{b_{j_{l+1}}\}\nin \s_{b_j}$ $\forall \ j \leq i_{l+1}$.
If $j_1=2$, then $j_{l+1}= 1+i_{l+1}$ and $\si \in \s_{b_{j_{l+1}}},$ an impossibility as $j_{l+1} \leq (l+1)(d+1)$.
Thus, $j_{l+1} > 1+i_{l+1}$.

  For each fixed $i \in \{i_{l+1}+1, \ldots, j_{l+1} -1\}$, consider $\tau_{i} = \si \cup \{b_{i}\} \sm \{b_{j_{l+1}}\}$. If $\tau_{i} \in M$, then $\tau_{i} \sm \{b_{i_{l+1}}\} \in M$ (from Lemma \ref{lemma6}$(2)$ as $ i \ngeq j_{l}+d+1=j_{l+1}$). Therefore, $\exists  \ j \leq i_{l+1}$ such that $\tau_{i} \in \s_{b_{j}}$ (Proposition \ref{prop2}). Hence, $\si \sm \{b_{j_{l+1}}\} \nin \s_{b_{j}}, \ \forall \ j <j_{l+1}$ and $\si, \ \si \sm \{b_{j_{l+1}}\} \in \s_{b_{j_{l+1}}}$, which is not possible. If $\tau_{i} \nin M$, then the previous statement holds, giving a contradiction. Therefore, $\si \sm \{b_{j_{l+1}}\} \nin M$.
\end{enumerate}
  \end{enumerate}

 \noindent ${\emph (\Longleftarrow)}$  Assume that $\si$ satisfies $P_{l+1}$ and $\si \nin \s_{b_i}$ $\forall ~ i \leq l(d+1)$. %We have to show that $\si \nin \s_{b_i}$ $\forall ~i \leq (l+1)(d+1)$.
 Since $\si$ satisfies $P_{l+1}$, $\si \sm \{b_{i_{l+1}}\}, ~\si \sm \{b_{j_{l+1}}\}\nin  M$. Thus, $\si \nin \s_{b_i}$ for $i = i_{l+1}, ~j_{l+1}$.
  
 For any $i\in [m]$ such that $i_{l+1} < i <j_{l+1}$, we see that $b_{i} \nin \si$ and $\si \cup \{b_i\} \in M$. Since $j_{l}+ (d+1) =j_{l+1} >i$, from Lemma \ref{lemma6}$(2)$, $\si \cup \{b_i\} \sm \{b_{i_{l+1}}\} \in M$. Clearly, $\si \cup \{b_{i}\}$ satisfies $P_{l}$ and therefore, from Proposition \ref{prop2}, $\si \cup \{b_{i}\} \in \s_{b_{i_{l+1}}}$. Hence, $\si \nin \s_{b_i}$, whenever $ i \leq j_{l+1}$.  We now consider the following two cases.
 
 \begin{enumerate}
 \item[(1)] $j_{1}=d+1$.
 Here, $j_{l+1}=(l+1)(d+1)$ and hence the result follows.
 
\item[(2)] $j_{1}<d+1$. Consider $i \in \{j_{l+1}+1, \ldots, (l+1)(d+1)\}$.

\begin{enumerate}
 \item[(2.1)] Let  $i>k$. Here, $b_{i} \nin \si$ and from Lemma \ref{lemma6}$(1)$, $\si \cup\{b_i\} \sm \{b_{1}\} \in M$, thereby showing that $\si \cup\{b_i\} \in \s_{b_1}$. Hence, $\si \nin \s_{b_i}$ $\forall ~ i \leq (l+1)(d+1)$.
 
 \item[(2.2)] Let $i \leq k$. If $b_{i} \in \si$, then by Lemma \ref{lemma6}$(2)$, $\si \sm \{b_{j_{l+1}}\} \nin M \Rightarrow i \geq i_{l+1}+d+1 =1+(l+1)(d+1)$, a contradiction. Therefore, $b_{i} \nin \si$.   
%We will now show that $\si \cup \{b_{i}\} \in \s_{b_{j_{l+1}}}$.
\begin{claim}\label{claim3}
$\si \cup \{b_{i}\}, ~ \si \cup \{b_{i}\} \sm \{b_{j_{l+1}}\}  \in \s_{b_{j_{l+1}}}$.
\end{claim}
 
 Suppose  $\si \cup \{b_{i}\} \sm \{b_{j_{l+1}}\} \subseteq B_{q}$. If either $q<i$ or $q>i$, then $b_{i} \in B_{q} \Rightarrow b_{j_{l+1}} \in B_{q}$, which implies that $\si \subseteq B_{q}$. From Lemma \ref{lemma6}(2) we get $q\neq i$. Therefore, $\si \cup \{b_{i}\} \sm \{b_{j_{l+1}}\} \in M$.
Using equation \ref{eq1}, we see that $\si \cup \{b_{i}\} \sm \{b_{s}\} $ and $\si \cup \{b_{i}\} \sm \{b_{s}, b_{j_{l+1}}\}  \nin M ~ \forall ~ s \leq i_{l+1}$. From Proposition \ref{prop2}, we conclude that $\si \cup \{b_{i}\}$ and $\si \cup \{b_{i}\} \sm \{ b_{j_{l+1}}\} \nin \s_{b_i}$ $\forall ~ i \leq i_{l+1}$. 

If $j_1=2$, then $j_{l+1}= 1+ i_{l+1}$. Here, $\si \cup \{b_{i}\} \sm \{b_{j_{l+1}}\}  \in \s_{b_{j_{l+1}}}.$
Let $j_1 >2$ and $s \in \{1+i_{l+1}, \ldots , j_{l+1}-1\}$.

Using Lemma \ref{lemma6}, we see that $\si \cup \{b_i, b_s\} \sm \{b_{i_{l+1}}\} \nin M$ implies that $s \geq j_{l}+d+1=j_{l+1}$ and $\si \cup \{b_i, b_s\} \sm \{b_{i_{l+1}}, b_{j_{l+1}}\} \nin M$ implies that $i \geq s+d+1\geq 1+l(d+1)+(d+1)$, both of which are impossible.  
Thus, $\si \cup \{b_i, b_s\}$ and $\si \cup \{b_i, b_s\} \sm \{b_{i_{l+1}}\}$ satisfy $P_{l}$. From Proposition \ref{prop2},  $\si \cup \{b_i, b_s\}$ and $\si \cup \{b_i, b_s\} \sm \{b_{j_{l+1}}\} \in \s_{b_{i_{l+1}}}$, {\it i.e.} $\si \cup \{b_{i}\}, \si \cup \{b_i\} \sm \{b_{j_{l+1}}\}  \nin \s_{b_{s}}$ $\forall ~ s <j_{l+1}$, thereby proving the claim.

From Claim \ref{claim3}, $\si \cup \{b_{i}\} \in \s_{b_{j_{l+1}}}$ $\forall ~ i \in \{j_{l+1}+1, \ldots, (l+1)(d+1)\}$. Therefore, $\si \nin \s_{b_i} ~\forall ~ i \leq (l+1)(d+1)$.  
\end{enumerate}
\end{enumerate}
\end{proof}

We now discuss the criteria for cells satisfying the property $P_{t}$ to be critical cells.

 \begin{lem}\label{lemma8} Let $\si$ satisfy $P_{t}$. Then, $\si \in C$  if and only if either
$j_{t}=k$ or
 $\si =\{b_{1}, b_{j_{1}}, b_{i_{2}}, b_{j_{2}}, \ldots, b_{i_{t}}, b_{j_{t}}, b_{k}=b_{i_{t+1}}\}$.
 \end{lem}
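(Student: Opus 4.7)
The plan is to combine Lemma \ref{lemma4}, which already handles all $\s_{b_{i}}$ with $i \leq t(d+1)$, with a direct analysis of the remaining $\alpha$ indices $i \in \{i_{t+1},\ldots,m\}$, where $i_{t+1}=1+t(d+1)$. Since $\si$ satisfies $P_{t}$, Lemma \ref{lemma4} gives $\si\nin\s_{b_{i}}$ for every $i\leq t(d+1)$, and $\si\in M$ already rules out every $\s_{a_{j}}$; hence $\si\in C$ iff $\si\nin\s_{b_{i}}$ for $i\in\{t(d+1)+1,\ldots,m\}$. When $\alpha=0$ this index set is empty, so $\si$ is automatically critical, and the direct computation $k = m+j_{1}-d-1 = j_{1}+(t-1)(d+1) = j_{t}$ supplies the first alternative. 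So the substantive content is the case $\alpha\geq 1$, where I must identify the critical $P_{t}$-cells as exactly the sets $\si=\{b_{1},b_{j_{1}},b_{i_{2}},\ldots,b_{j_{t}},b_{k}\}$ with $b_{k}=b_{i_{t+1}}$ (which forces $j_{1}=d+2-\alpha$).

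For the $(\Longleftarrow)$ direction with $\alpha\geq 1$, I assume $\si$ has the prescribed form and check $\si\nin\s_{b_{i}}$ for each $i\in\{i_{t+1},\ldots,m\}$ in turn. For $i=k=i_{t+1}$ (where $b_{k}\in\si$), set $\tau:=\si\sm\{b_{k}\}$; either $\tau\nin M$ by a direct cyclic gap count around $b_{j_{t}}$ (when the wrap-around gap is large enough), or $\tau\in M$ but fails to satisfy $P_{1}$ because $\max(\tau)=j_{t}\neq m+j_{1}-d-1$, so by the contrapositive of Lemma \ref{lemma4} with $l=1$ some $\s_{b_{j}}$ with $j\leq d+1<k$ absorbs $\tau$. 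Either way $\tau$ has exited the leftover before step $k$, so $\si\nin\s_{b_{k}}$. For $i\in\{i_{t+1}+1,\ldots,m\}$, the cell $\si\cup\{b_{i}\}$ has maximum index $i>k$, which violates the form $P_{1}$ demands, and hence Lemma \ref{lemma4} forces $\si\cup\{b_{i}\}\in\s_{b_{j}}$ for some $j\leq d+1<i$, giving $\si\nin\s_{b_{i}}$.

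For the $(\Longrightarrow)$ direction I first observe that $P_{t}$ on its own forces $j_{1}\geq d+2-\alpha$: the clause $\si\sm\{b_{j_{t}}\}\nin M$ requires a cyclic gap of size $\geq d$ in the index set of $\si\sm\{b_{j_{t}}\}$, and an inventory of all $P_{t}$-gaps (the alternating values $j_{1}-2$ and $d+1-j_{1}$, together with the newly created gap from $b_{i_{t}}$ to $b_{k}$ of size $\alpha+j_{1}-2$) shows that the only gap that can reach $d$ is the last one, whence $\alpha+j_{1}-2\geq d$. If $j_{1}>d+2-\alpha$ then $k>i_{t+1}$, and I adapt the matching argument of Proposition \ref{prop2}: whether or not $b_{i_{t+1}}\in\si$, the cell obtained by toggling $b_{i_{t+1}}$ again satisfies $P_{t}$ (using Equation \ref{eq1} together with a direct verification that $b_{i_{t+1}}$ lies in each $B_{j_{r}}$ and $B_{i_{r+1}}$ witnessing condition~(b)), so $\si$ is matched with that toggled cell under $\mu_{b_{i_{t+1}}}$, contradicting $\si\in C$. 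Hence $j_{1}=d+2-\alpha$ and $k=i_{t+1}$. Finally, any hypothetical extra $b_{l}$ with $j_{t}<l<k$ would lie inside the critical gap $(i_{t},i_{t+1})$ isolated above; its inclusion splits that gap into two pieces both of size below $d$, breaking $\si\sm\{b_{j_{t}}\}\nin M$ and hence $P_{t}$. So no such extras are permitted and $\si$ is forced into the desired form.

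The principal obstacle is the wrap-around modular arithmetic: for $r\geq 2$ the sets $B_{j_{r}}$ and $B_{i_{r+1}}$ cycle past $m$, so confirming that $b_{i_{t+1}}$ lies in each of them requires careful index bookkeeping, especially when $t$ is small or $\alpha$ is close to $d$. A secondary subtlety is keeping the gap inventory consistent in the presence of extras, since adding or removing extras alters which gaps can attain size $d$ and hence which toggling argument applies in the $(\Longrightarrow)$ direction.
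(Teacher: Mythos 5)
Your proposal is correct and follows essentially the same route as the paper: reduce to the indices $i>t(d+1)$ via Lemma \ref{lemma4}, then decide membership in $\s_{b_{i}}$ for $i\in\{i_{t+1},\ldots,m\}$ by checking whether the cells obtained by toggling $b_{i}$ still satisfy $P_{t}$ (equivalently $P_{1}$), with the same key computations ($k-j_{t}=\alpha$, the Equation \ref{eq1}-type containments, and the exclusion of extras between $j_{t}$ and $k$). The only differences are organizational — you derive $k\geq i_{t+1}$ from a gap count before the matching argument and invoke Lemma \ref{lemma4} with $l=1$ rather than $l=t$ — neither of which changes the substance.
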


 \begin{proof}
 Since $\si$ satisfies property $P_{t}$, $\si \nin \s_{b_{i}}$ for all $i \in \{1, \ldots, t(d+1)\}$ and $\si =\{b_{1}, b_{j_1},  \ldots, b_{i_{t}}, b_{j_{t}}, \ldots, b_{k}\}$, where $j_{1} \leq d+1$ and $k=j_1+m-(d+1)$. Since $m=t(d+1) +\alpha$, we see that either $\alpha =0$ or $\alpha >0$.
 
 \begin{enumerate}
 \item $\alpha=0$.
 
 Since $j_{t} = j_{1}+(t-1)(d+1) = j_1+m-(d+1)=k$, we observe that $\si$ is of the form $\{b_{1}, b_{j_1},\ldots, b_{i_{t}}, b_{j_{t}}=b_{k}\}$.  Here $t(d+1)=m$ implies that $\si$ is a critical cell. The converse is straightforward.
 
 \item $\alpha>0$.
 
  Here, $j_{t} <k$. Let 
   $p \in \{j_t+1, \ldots, k\}$ be the smallest integer such that $b_p \in \si$.
  Since $\si \sm \{b_{j_{t} }\} \nin M$ (property $P_{t}$) and $\si \nin \s_{b_{i}} \ \forall \ i \leq t$, by an argument similar to that in $(1)$ of  Proposition \ref{prop2}, we get $\si \sm \{b_{j_{t}}\} \subseteq B_{p}$ and $ p = 1+t(d+1)=i_{t+1}$.
Thus, if $\si$ satifies $P_{t}$, then $\si =\{b_{1}, b_{j_{1}}, \ldots, b_{i_{t}}, b_{j_{t}}, b_{i_{t+1}}, \ldots , b_{k}\}$.

\begin{enumerate}
\item $\si$ is a critical cell implies that $i_{t+1}=k$.
If $i_{t+1} <k$, then $j_{1} > d-\alpha +2$. 

Suppose that there exists $s \in [m]\sm \{i_{t+1}\}$, such that $\si \sm \{b_{i_{t+1}}\} \subseteq B_{s}$. Observe that $k <j_1+t(d+1)$. Thus, $s<k$ implies that $s<j_{1}+t(d+1)$. 

If $s > i_{t+1}$, then $s+m-(d+1)< j_{t}$, showing that $j_t \nin B_{s}$.

If $s < i_{t+1}$, then by Lemma \ref{rmk0}, $b_{k}\in B_{s}$ shows that $b_{i_{t+1}}\in B_{s}$, {\it i.e.} $\si \subseteq B_{s}$. Therefore,  $\si \sm \{b_{i_{t+1}}\} \in M$.  Further, $\si \sm \{b_s, b_{i_{t+1}}\} \nin M$, whenever $\si \sm \{b_{s}\} \nin M.$ Thus, $\si \sm \{b_{i_{t+1}}\} $ satisfies $P_{t}$. From Lemma \ref{lemma4}, 
  $\si \setminus \{ b_{i_{t+1}} \} \notin \s_{b_j}$ for all $ 1 \leq j \leq t(d+1)$. 
Therefore, $\si \in \s_{b_{i_{t+1}}}$, which is impossible as $\si$ is a critical cell. Thus, $k=i_{t+1}$.
  
  \item $i_{t+1}=k$ implies that $\si$ is a critical cell.
  
  Let $t_0 <k$ be the  largest number such that $b_{t_0} \in \si \sm \{b_{k}\}$. Since $t_0 \neq j_{1}+m -(d+1)$, $\si \sm \{b_{k}\}$ does not satisfy $P_{t}$. Thus, either $\si \sm \{b_{k}\} \nin M$ or $\si \sm \{b_{k}\}\in \s_{b_{i}}$, for some $i \leq t(d+1)$ (by Lemma \ref{lemma4}). In both the cases, $\si \nin \s_{b_{k}}$.
  
  If $\alpha=1$, then $m=t(d+1)+1 =k$. Here, $\si$ is a critical cell.
  
  If $\alpha >1$, then for each $i \in \{k+1, \ldots , m\}$, $b_{i} \nin \si$ and $\si \cup \{b_{i}\} \in M$. By the same argument as the one above, $\si \cup \{b_{i}\}$ does not satisfy $P_{t}$. By Lemma \ref{lemma4}, it belongs to $\s_{b_{j}}$, for some $j \leq t(d+1)$.
 Thus, $\si \nin \s_{b_{j}}$, for all $j \leq m$, thereby showing that $\si$ is a critical cell.
 \end{enumerate}
 \end{enumerate}
 
 \end{proof}
 
 We now prove the main result of this paper.

\noindent{\bf Proof of the Theorem \ref{maintheorem}}  
 \begin{proof}   Let $\eta$ be a critical cell. From Lemma \ref{lemma2}, $|\eta| \geq 2$ , $\eta \subseteq B$ and $\eta=\{b_{1}, b_{j_2}, \ldots , b_{k}\}$, where $1 < j_{2} < \dots <k$ and $k \leq m$. Since $m=t(d+1) +\alpha$ and $\eta \nin \s_{b_{i}} \ \forall \ i \leq m$, in particular $\eta \nin \s_{b_{i}}$ $\forall \ i \leq t(d+1)$. Therefore, $\eta$ satisfies $P_{t}$  (by Lemma \ref{lemma4}). Lemma \ref{lemma8} gives the possible critical cells.

\begin{itemize}
 \item[(i)] $\alpha = 0$. 
 	
 	From Lemma \ref{lemma8}, $\eta= \{b_{1}, b_{j_1},\ldots, b_{i_{t}}, b_{j_{t}}=b_{k}\}$. Since $ j_{1} \in \{2, \ldots , d+1\}$,  we have exactly $d$ distinct critical cells of dimension $2t-1$  corresponding to the matching $\mu$ on the poset $P$. From Theorem \ref{dmt1}, $\text{Ind}(G_{m}^{d} )\simeq \bigvee\limits_{\text{d-copies}}S^{2t-1} $.

\item[(ii)] $\alpha > 0.$ 
 
If $d=m$, then $t=0$ and $\alpha=d$. Here, $G$ is a complete bipartite graph. Therefore the maximal  simplices are $\{a_1, a_2, \ldots, a_{m}\}$ and $\{b_1, b_2, \dots, b_{m}\}$, both of which are contractible. Thus, $\text{Ind}(G_{m}^{d} )\simeq S^{0}$.  

Let $t>0$.
 From Lemma \ref{lemma8}, $\eta= \{b_{1}, b_{j_1},\ldots, b_{i_{t}}, b_{j_{t}}, b_{i_{t+1}}=b_{k}\}$. Since $k=i_{t+1}$, we get $j_{1}=d-\alpha +2$. Thus, there exists exactly one critical cell, which is of dimension $2t$. Therefore, from Theorem \ref{dmt1}, $\text{Ind}(G_{m}^{d} )\simeq S^{2t} $.  
\end{itemize}
This completes the proof of Theorem \ref{maintheorem}.
\end{proof}

% The topology, in particular the topological connectivity of $\text{Ind}(G)$
% is of interest since last few years. It is well known that any finite
% simplicial complex is homotopic to $\text{Ind}(G)$, for some graph $G$ [see, \cite{eh}]. In 
% \cite{jj}, J. Jonsson shows that,
% independence complexes of bipartite graphs are exactly the same as the
%homotopy types of suspensions of compact polyhedra. Further, the independence complexes of several families of graphs, 
% has been studied, some example are, Cyclic graphs, Line graphs \cite{koz},
%Chordal graph \cite{kk}, Claw free graph \cite{ea1} etc.. The connectivity of 
%independence complexes  has application in 
%several places, for example it has been used to study the Teverberg graph \cite{ea},
%the Independent system of representatives \cite{abz} etc. 
\bibliographystyle{plain}

\end{document}